\newcommand{\supp}{\operatorname{supp}}
\newcommand{\ihalf}[0]{{i+1/2}}
\newcommand{\imhalf}[0]{{i-1/2}}
\newcommand{\R}[0]{\mathbb{R}}
\newcommand{\N}[0]{\mathbb{N}}
\newcommand{\qqand}[0]{ \qquad{\text{and}} \qquad}
\newcommand{\qand}[0]{ \quad{\text{and}} \quad}
\newtheorem{remark}{Remark}
\renewcommand{\d}[0]{{\rm{d}}}
\numberwithin{theorem}{section}
\newcommand{\TheTitle}{Zoology of a non-local cross-diffusion model for two species} 
\newcommand{\TheAuthors}{J. A. Carrillo, Y. Huang, and M. Schmidtchen}
\headers{\TheTitle}{\TheAuthors}
\title{{\TheTitle}}
\author{
  Jos\'e A. Carrillo\thanks{Department of Mathematics, Imperial College London, SW7 2AZ London, UK.
    (\email{carrillo@imperial.ac.uk})}
  \and
  Yanghong Huang\thanks{School of Mathematics, The University of Manchester, Manchester M13 9PL, UK. (\email{yanghong.huang@manchester.ac.uk}).}
  \and
  Markus Schmidtchen\thanks{Department of Mathematics, Imperial College London, SW7 2AZ London, UK.}(\email{m.schmidtchen15@imperial.ac.uk})
}
\begin{document}

\maketitle

\begin{abstract}
We study a non-local two species cross-interaction model with cross-diffusion. We propose a positivity preserving finite volume scheme based on the numerical method introduced in Ref. \cite{CCH15} and explore this new model numerically in terms of its long-time behaviours. Using the so gained insights, we compute analytical stationary states and travelling pulse solutions for a particular model in the case of attractive-attractive/attractive-repulsive cross-interactions.  We show that, as the strength of the cross-diffusivity decreases, there is a transition from adjacent solutions to completely segregated densities, and we compute the threshold analytically for attractive-repulsive cross-interactions. Other bifurcating stationary states with various coexistence components of the support are analysed in the attractive-attractive case. We find a strong agreement between the numerically and the analytically computed  steady states in these particular cases, whose main qualitative features are also present for more general potentials. 
\end{abstract}

\begin{keywords}
cross-diffusion, non-local aggregation-diffusion systems, volume exclusion
\end{keywords}

\begin{AMS}
   	35K55, 65N08, 35C05
\end{AMS}

\section{\label{sec:intro}Introduction}
Multi-agent systems in nature oftentimes exhibit emergent behaviour, \textit{i.e.} the formation of patterns in the absence of a leader or external stimuli such as light or food sources. The most prominent examples of these phenomena are probably fish schools, flocking birds, and herding sheep, reaching across scales
from tiny bacteria to huge mammals.

While self-interaction models for one particular species have been extensively studied, cf. Refs. \cite{OCBC06, CS07, CFTV10, MEK99, TBL06} and references therein, there has been a growing interest in understanding and modelling interspecific interactions, \textit{i.e.} the  interaction among different types of species. One way to derive macroscopic models from microscopic dynamics consists in taking suitable scaling limits as the number of individuals goes to infinity. Minimal models for collective behaviour include attraction and/or repulsion between individuals as the main source of interaction, see \cite{TBL06,CDP,CFTV10,review} and the references therein. Attraction and repulsion are normally introduced through effective pairwise potentials whose strength and scaling properties determine the limiting continuum equations, see \cite{Ol,CBM,BV,meanfield}. 
Usually localised strong repulsion gives rise to non-linear diffusion like those in porous medium type models~\cite{Ol}, while long-range attraction remains non-local in the final macroscopic equation,
 see \cite{meanfield} and the references therein.

In this paper we propose a finite-volume scheme to study two-species systems of the form
\begin{subequations}
\label{eq:OurModelMultiD}
\begin{align}
    \partial_t \rho &= \nabla \cdot\Big(\rho \nabla \big(W_{11}\star\rho + W_{12}\star\eta + \epsilon (\rho+\eta)\big)\Big),\\
    \partial_t \eta &= \nabla \cdot\Big(\eta \nabla \big(W_{22}\star\eta + W_{21}\star\rho + \epsilon (\rho+\eta)\big)\Big),
\end{align}
with given initial data
\begin{align}
	\rho(x,0) = \rho_0(x), \qquad \text{and} \qquad \eta(x,0) = \eta_0(x).
\end{align}
\end{subequations}
Here, $\rho, \eta$ are two unknown mass densities, $W_{11}, W_{22}$ are self-interaction potentials  (or intraspecific interaction potentials), $W_{12},W_{21}$ are cross-interaction potentials (or interspecific interaction), and $\epsilon>0$ is the coefficient of the cross-diffusivity. The non-linear diffusion term of porous medium type can be considered as a mechanism to include volume exclusion in cell chemotaxis \cite{Hillen01,Hillen02,CC}, since it corresponds to very concentrated repulsion between all individuals.

This model can also be easily understood as a natural extension of the well-known aggregation equation (cf. \cite{MEK99,TBL06,BCL,CDFLS11} ) to two species including a cross-diffusion term. Common interaction potentials for the one species case include power laws $W(x)=|x|^p/p$, as for instance in the case of granular media models, cf. \cite{BCP97, Tos00}. Another choice is a combination of power laws of the form $W(x)=|x|^a/a- |x|^b/b$, for $-N<b<a$ where $N$ is the space dimension. These potentials, featuring short-range repulsion and long-range attraction, are typically chosen in the context of swarming models, cf. \cite{KSUB,BCLR2,FH,FHK,BKSUV,CHM14,CDM16}. Other typical choices include characteristic functions of sets (like spheres) or Morse potentials 
$$
	W(x) = -c_a \exp(-|x|/l_a) + c_r \exp(-|x|/l_r),
$$
or their regularised versions 
$
W_p(x) = -c_a \exp(-|x|^p/l_a) + c_r \exp(-|x|^p/l_r),
$
where $c_a,c_r$ and $l_a,l_r$ denote the interaction strength and radius of the attractive (resp. repulsive) part and $p\geq2$,  cf. \cite{OCBC06, CMP13, CHM14}. These potentials display a decaying interaction strength, \textit{e.g.} accounting for biological limitations of visual, acoustic or olfactory sense. The asymptotic behaviour of solutions to one single equation where the repulsion is modelled by non-linear diffusion and the attraction by non-local forces has also received lots of attention in terms of qualitative properties, stationary states and metastability, see \cite{BFH,CCH15,EK,CCH1,CCH2} and the references therein.

Systems without cross-diffusion, $\epsilon=0$, were proposed in \cite{DFF13} as the formal mean-field limit of the following ODE system 
\begin{align*}
	\dot x_i &= - \frac1N \sum_{j\neq i}W_{11}(x_i-x_j) - \frac1M\sum_{j\neq i} W_{12}(x_i-y_j),\\
	\dot y_i &= - \frac1M \sum_{j\neq i}W_{22}(y_i-y_j) - \frac1N\sum_{j\neq i} W_{21}(y_i-x_j).
\end{align*}
For symmetrisable systems, \textit{i.e.} systems such that there exists some positive constant $\alpha>0$ with $W_{12}=\alpha W_{21}$, they show the system can be assigned an interaction energy functional. As a result, the system admits a gradient flow structure and variational schemes can be applied to ensure existence of solutions, cf. \cite{DFF13, JKO98}. However, in many contexts such a condition is too exclusive in the sense that lots of applications exhibit a lack of symmetry in the interactions between different species. 

In order to treat the system for general, and possibly different, cross-interactions $W_{12},W_{21}$, they modify the well-known variational scheme to prove convergence even in the absence of gradient flow structure. 
These systems without cross-diffusion appear in modelling cell adhesion in mathematical biology with applications in zebrafish patterning and tumour growth models, see \cite{GC,DTGC,PBSG,VS} for instance. 

In this paper we extend their cross-interaction model by a cross-diffusion term which is used to take into account the population pressure, \textit{i.e.} the tendency of individuals to avoid areas of high population density. As cross-diffusion we choose the form introduced by Gurtin and Pipkin in their seminal paper \cite{GP84}. Although their work is antedated by results of mathematicians and biologists interested in density segregation effects of biological evolution equations, cf. \cite{SKT,Shi80} and references therein, the particularity about their population pressure model is the occurrence of strict segregation of densities under certain circumstances, cf. \cite{GP84, BGH87, BGH87a}. This cross-diffusion term has been the basis to incorporate volume exclusions in models for e.g. tumour growth \cite{BDPM10} or cell adhesion \cite{MT15}. 

Hence, our model is of particular interest from a modelling point of view taking into account non-local interactions between the same species and different species as well as the urge of both species to avoid clustering. We discover a rich asymptotic behaviour including phenomena such as segregation of densities, regions of coexistence, travelling pulses -- all of which are observed in biological contexts, cf. \cite{SCBBSP10, TKWKMTT11}. Existence of segregated stationary states under certain assumptions on the interaction potentials for small cross-diffusivity has been very recently obtained in \cite{BDFS}. Here we show that it is in fact possible to find explicit stationary states and travelling pulses for certain singular not necessarily decaying interaction potentials showing coexistence and segregation of densities.

The rest of this paper is organised as follows: in Section \ref{sec:system} we discuss the basic properties of the system \eqref{eq:OurModelMultiD} in one dimension, in Section \ref{sec:schemes}  we propose our numerical scheme which is used in Section \ref{sec:numerical_results} to explore the model and its long-time behaviour numerically. These insights are used to make reasonable assumptions on the support of the asymptotic solutions in order to derive analytic expressions for their shape and give a first classification of the zoology of the different stationary states. Finally we discuss in Section
\ref{sec:gen} how generic these phenomena are for different potentials and we draw the final conclusions of this work in Section \ref{sec:con}.

\section{\label{sec:system}A non-local cross-diffusion model for two species}
Throughout this paper we consider system \eqref{eq:OurModelMultiD} in one spatial dimension. Then the  model reads
\begin{subequations}
\label{eq:our_system}
\begin{align}
    \partial_t \rho &= \partial_x (\rho \partial_x (W_{11}\star\rho + W_{12}\star\eta + \epsilon (\rho+\eta))),\label{eq:a_our_system}\\
    \partial_t \eta &= \partial_x (\eta \partial_x (W_{22}\star\eta + W_{21}\star\rho + \epsilon (\rho+\eta))),\label{eq:b_our_system}
\end{align}
\end{subequations}
for some initial data $\rho(x,0) = \rho_0(x)$, and $\eta(x,0) = \eta_0(x)$,
and radially symmetric potentials  $W_{ij}$, for $i,j= 1,2$. We can obtain some apriori estimates on solutions by using the following energy
\begin{align*}
	\mathcal{E}(\rho, \eta) = \int_{\R } \rho \log \rho\; \d x + \int_{\R } \eta \log \eta\;\d x.
\end{align*}
We note that for $W_{ij} \in W^{2,\infty}(\R)$,  along any solution $(\rho,\eta)$ of system \eqref{eq:our_system}, there holds
\begin{align*}
	\frac{\d }{\d t} \mathcal{E}(\rho, \eta) 
	=&\,-\int \epsilon [\partial_x (\rho + \eta)]^2 \d x\\
	 &-\int \partial_x \rho\, \partial_x\big(W_{11}\star\rho + W_{12}\star \eta\big)\d x
	 -\int \partial_x \eta\, \partial_x\big(W_{22}\star\eta + W_{21}\star \rho\big)\d x\\
     \leq &\, - \int \epsilon |\partial_x (\rho + \eta)|^2 \d x \\
     &+ \int \rho \partial_x^2\big(W_{11} \star \rho + W_{12}\star \eta\big) \d x + \int \eta \partial_x^2\big(W_{22}\star \eta + W_{21}\star \rho\big)\d x,
\end{align*}
that is
\begin{align*}
	\frac{\d }{\d t} \mathcal{E}(\rho, \eta) \leq C - \int_{\R } \epsilon |\partial_x(\rho + \eta)|^2 \d x.
\end{align*}
In the case of $W_{ii}= C_{ii} x^2/2$ and $W_{ij}=\pm C_{ij}|x|$ for $i\neq j$ with non-negative constants $C_{ij}$, the estimate is also true, since 
\begin{align*}
	\int \rho \partial_x^2 W_{11}\star \rho
 \d x = C_{11} m_1\!\! \int \rho  \d x = C_{11} m_1^2, \quad
 	\left|\int \rho \partial_x^2 W_{ij} \star \eta \d x\right|  \leq 2C_{ij} \int \rho \eta < \infty,
 \end{align*}
and similarly for the terms in \eqref{eq:b_our_system}, as long as $\rho, \eta \in L^\infty(0,T;L^\infty(\R))$. Thus the terms
\begin{align*}
\int \rho \partial_x^2(W_{11} \star \rho + W_{12}\star \eta) \d x, 
\qquad
\mbox{and similarly}
\qquad
\int \eta \partial_x^2(W_{22}\star \eta + W_{21}\star \rho)\d x
\end{align*}
are bounded. We conclude that 
\begin{align*}
	\frac{\d }{\d t} \mathcal{E}(\rho, \eta) \leq C - \int_{\R } \epsilon |\partial_x(\rho + \eta)|^2 \d x,
\end{align*}
implying that $\rho+ \eta \in L^2(0,T;H^1(\R))$. We deduce that the sum of both species remains continuous for almost all positive times --- a property we will make use of later.  Now, let us introduce our notion of steady states.

\begin{definition}[\label{def:steadystate}Steady states]
	A pair of functions $(\rho,\eta)$ defined on $\mathbb{R}$ is called a steady state to \eqref{eq:our_system}, if both functions are integrable and bounded, $\rho, \eta \in L^1(\R)\cap L^\infty(\R)$, such that their sum satisfies  $\sigma:=\rho+ \eta \in H^1(\R)$ and there holds
\begin{align*}
	0 &= \partial_x (\rho \partial_x (W_{11}\star\rho + W_{12}\star\eta + \epsilon (\rho+\eta))),\\
	0 &= \partial_x (\eta \partial_x (W_{22}\star\eta + W_{21}\star\rho + \epsilon (\rho+\eta))),
\end{align*}
in the distributional sense.
\end{definition}

\begin{proposition}[Almost characterisation of steady states]
Any pair of functions $\rho, \eta\in L^1(\R)\cap L^\infty(\R)$ satisfying $\rho+\eta \in H^1(\R)$ such that any connected component of their supports has non-empty interior is a steady state of system \eqref{eq:our_system} if and only if there exist constants $c_1,c_2\in\R$, possibly different on different connected components of the supports, such that 
	\begin{align}
		\label{eq:steadystatecond}
		\begin{split}
		c_1 &= W_{11}\star \rho + W_{12}\star\eta + \epsilon(\rho + \eta),\\
		c_2 &= W_{22}\star \eta + W_{21}\star\rho + \epsilon(\rho + \eta).
		\end{split}
	\end{align}
\end{proposition}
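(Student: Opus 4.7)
The plan is to establish the two implications separately; the forward (``$\Leftarrow$'') direction is almost immediate, while the reverse (``$\Rightarrow$'') direction requires translating the distributional PDE into a pointwise Euler--Lagrange identity on the supports. Throughout, I write $V_1:=W_{11}\star\rho+W_{12}\star\eta+\epsilon(\rho+\eta)$ and $V_2:=W_{22}\star\eta+W_{21}\star\rho+\epsilon(\rho+\eta)$, and note first that the 1D embedding $H^1(\R)\hookrightarrow C_0(\R)$ together with the smoothing effect of the convolutions makes both $V_1$ and $V_2$ continuous on $\R$.

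For the ``$\Leftarrow$'' direction, assume \eqref{eq:steadystatecond} holds. On the interior of each connected component of $\mathrm{supp}\,\rho$, $V_1$ equals a constant, so its weak derivative vanishes there; outside $\mathrm{supp}\,\rho$, $\rho$ itself is zero. Consequently $\rho\,\partial_x V_1 = 0$ almost everywhere, and the distributional identity $\partial_x(\rho\,\partial_x V_1)=0$ follows. The same reasoning applied to $V_2$ handles the $\eta$-equation.

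For the ``$\Rightarrow$'' direction, start from $\partial_x(\rho\,\partial_x V_1)=0$ in $\mathcal{D}'(\R)$, which forces $\rho\,\partial_x V_1$ to equal some constant $C_1$ almost everywhere. I would then show $C_1 = 0$: since $\sigma=\rho+\eta\in H^1(\R)$ is continuous and vanishes at infinity, and $0\leq\rho\leq\sigma$, the factor $\rho$ becomes arbitrarily small at infinity, while $\partial_x V_1 = W_{11}'\star\rho + W_{12}'\star\eta + \epsilon\,\partial_x\sigma$ stays under control (bounded convolution parts and an $L^2$ contribution from $\partial_x\sigma$); hence a constant-valued product $\rho\,\partial_x V_1$ must be zero. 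The same argument yields $\eta\,\partial_x V_2 \equiv 0$, and in particular $\partial_x V_1 = 0$ almost everywhere on $\{\rho>0\}$.

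The main obstacle, and the step in which the non-empty-interior hypothesis plays its role, is upgrading this a.e. vanishing of $\partial_x V_1$ on $\{\rho>0\}$ to the pointwise identity $V_1\equiv c_1$ on each connected component $I$ of the interior of $\mathrm{supp}\,\rho$. Here $I$ is by hypothesis an open interval, and by the definition of $\mathrm{supp}\,\rho$ the set $\{\rho>0\}\cap I$ is dense in $I$ (otherwise some open sub-interval of $I$ would lie in the complement of the support). Combining this with the continuity of $V_1$ and the absolute-continuity representation $V_1(x)-V_1(x_0)=\int_{x_0}^x \partial_x V_1$ valid for $V_1\in H^1_{\mathrm{loc}}$, one extracts $V_1\equiv c_1$ on $I$, with $c_1$ allowed to depend on the component. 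A symmetric treatment of the second equation yields $V_2\equiv c_2$ on each connected component of $\mathrm{supp}\,\eta$, completing the proof. The delicate point I expect to spend the most effort on is the density-and-continuity argument in this last step, since $\{\rho>0\}$ need not have full Lebesgue measure in $I$ a priori, so the jump from ``derivative zero on a dense set'' to ``function constant on $I$'' must be done carefully.
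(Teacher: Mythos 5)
Your overall route is the same as the paper's: sufficiency because the velocity fields vanish where the densities live, and necessity via ``distributional derivative zero $\Rightarrow$ the flux $\rho\,\partial_x V_1$ is a constant $\Rightarrow$ that constant is zero $\Rightarrow$ $V_1$ is constant on each component of the support''. The only genuine variation is how you kill the constant: you argue by decay at infinity, using $0\le\rho\le\sigma\in C_0(\R)$ and ``controlled'' $\partial_x V_1$, whereas the paper simply notes that $\rho\,\partial_x V_1$ is an integrable function of $x$, and an integrable constant on $\R$ must be zero. The paper's version is preferable: your decay argument needs non-negativity of the densities and pointwise control of $\partial_x V_1$, and the latter fails for the potentials actually used later ($W_{11}=x^2/2$ gives $W_{11}'\star\rho=m_1x-M_1$, unbounded), while $\rho\,(W_{11}'\star\rho)$ being \emph{constant and integrable} still forces it to vanish without any rate information.

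The one step you flag as delicate is indeed where the argument is incomplete, and the repair you sketch does not work. From $\rho\,\partial_x V_1=0$ you only get $\partial_x V_1=0$ a.e.\ on $\{\rho>0\}$, and density of $\{\rho>0\}$ in a component $I$ of the support, together with continuity and absolute continuity of $V_1$, is \emph{not} enough to conclude $V_1\equiv c_1$ on $I$: take $V_1$ with $\partial_x V_1=\mathbbm{1}_C$ for a fat Cantor set $C\subset I$; then $\partial_x V_1$ vanishes on an open set meeting every subinterval of $I$ in positive measure, yet $V_1$ is not constant. What is really needed is that $\{\rho=0\}\cap I$ is Lebesgue-null, and the stated hypothesis (every connected component of $\supp\rho$ has non-empty interior) does not give this — $\rho=\mathbbm{1}_{[0,1]\setminus C}$ has $\supp\rho=[0,1]$ and satisfies the hypothesis. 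You should be aware, however, that the paper's own proof is no better here: it disposes of this step with ``using the same argument as above'', and the qualifier ``almost'' in the proposition's name together with the closing remark about fat Cantor sets indicates the authors knowingly leave exactly this pathology unaddressed. So your diagnosis of where the difficulty sits is correct; the density-plus-continuity mechanism you propose to resolve it is not.
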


\begin{proof}
	Clearly, the characterisation is sufficient, since the velocity field vanishes in each connected component of their supports if there exist constants $c_1,c_2$ such that Eqs.  \eqref{eq:steadystatecond} are satisfied.
	Conversely, if there holds
	\begin{align*}
	0 &= \partial_x (\rho \partial_x (W_{11}\star\rho + W_{12}\star\eta + \epsilon (\rho+\eta))),\\
	0 &= \partial_x (\eta \partial_x (W_{22}\star\eta + W_{21}\star\rho + \epsilon (\rho+\eta))),
	\end{align*}
 	we note that $\rho, \eta, \partial_x (\rho + \eta)\in L^2(\R)$ by the definition of steady state, and therefore the right-hand sides are distributional derivatives of $L^1$ functions. By a well-known result (cf. \textit{e.g.} \cite{JLJ98}, Lemma 1.2.1.), we deduce that there exist constants $K_1,K_2\in \R$ such that 
 	\begin{align*}
		K_1 &= \rho \partial_x (W_{11}\star\rho + W_{12}\star\eta + \epsilon (\rho+\eta)),\\
		K_2 &= \eta \partial_x (W_{22}\star\eta + W_{21}\star\rho + \epsilon (\rho+\eta)).
	\end{align*}
 Due to the integrabilty properties of the right-hand sides above, we infer that $K_1=K_2=0$, and thus in the interior of any connected component of the supports of $\rho$ and $\eta$, we obtain that there exist constants $c_1,c_2\in \R$ such that 
 	\begin{align*}
		c_1 &= W_{11}\star \rho + W_{12}\star\eta + \epsilon(\rho + \eta),\\
		c_2 &= W_{22}\star \eta + W_{21}\star\rho + \epsilon(\rho + \eta),
	\end{align*}
using the same argument as above.
\end{proof}

Note that the assumption on the interiors of the supports of the species is purely technical and due to the regularity assumptions on our definition of stationary states. This avoids pathological cases such as functions supported on a fat Cantor set.

\section{\label{sec:schemes}Numerical scheme}
In order to solve system \eqref{eq:our_system}, we introduce 
a finite vo\-lu\-me scheme based on Ref. \cite{CCH15}. 
The problem is posed on the domain $\Omega:=[-L,L]$ which is divided into $N$ equal control volumes $(C_i)_{i=1,\ldots, N}$, with $C_i:=[x_{i-1/2},x_{i+1/2}]$ and uniform size $\Delta x:=x_{i+1/2}-x_{i-1/2}$.  Finally, the time interval $[0,T]$ is discretised by $t^n = n \Delta t$, for $n=0,\ldots,\lceil T / \Delta t \rceil $. We define the discretised initial data via
\begin{align*}
	\rho_i^0 := \frac{1}{\Delta x}\int_{C_i} \rho_0(x)\d x, \qquad \text{and}\qquad 	\eta_i^0 := \frac{1}{\Delta x}\int_{C_i} \eta_0(x)\d x.
\end{align*}

We integrate system \eqref{eq:our_system} over the test cell $[t^n,t^{n+1}]\times C_i$ to obtain
\begin{align*}
	\frac{1}{\Delta x} \int_{C_i} \rho(t^{n+1}, x) \d x  &= \frac{1}{\Delta x} \int_{C_i}  \rho(t^{n}, x)\d x  - \frac{1}{\Delta x} \big(\bar F_{i+1/2}^{n} - \bar F_{i+1/2}^{n}\big),\\
	\frac{1}{\Delta x} \int_{C_i} \eta(t^{n+1}, x) \d x  &= \frac{1}{\Delta x} \int_{C_i}  \eta(t^{n}, x)\d x  - \frac{1}{\Delta x} \big(\bar G_{i+1/2}^{n} - \bar G_{i+1/2}^{n}\big),
\end{align*}
where $\bar F_\ihalf^n, \bar G_\ihalf^n$ denote the flux on the boundary of cell $C_i$, \textit{i.e.}
\begin{align}
	\label{eq:ctsfluxes}
	\begin{split}
	\bar F_\ihalf^n &:= -\int_{t^n}^{t^{n+1}} \big(\rho \partial_x (W_{11}\star\rho + W_{12}\star\eta + \epsilon(\rho + \eta) )\big)(x_\ihalf,t) \d t,\\
	\bar G_\ihalf^n &:= -\int_{t^n}^{t^{n+1}} \big(\eta \partial_x (W_{22}\star\eta + W_{21}\star\rho + \epsilon(\rho + \eta) )\big)(x_\ihalf,t) \d t.
	\end{split}
\end{align}
Then the finite volume scheme  for the cell averages $\rho_i^n$ and $\eta_i^n$ reads
\begin{subequations}
\label{eq:scheme}
\begin{align}
	\label{eq:timestep}
	\begin{split}
	\rho_i^{n+1} = \rho_i^n - \frac{\Delta t}{\Delta x} \left(F_\ihalf^n - F_\imhalf^n\right), \\
	\eta_i^{n+1} = \eta_i^n - \frac{\Delta t}{\Delta x} \left(G_\ihalf^n - G_\imhalf^n\right),
	\end{split}
\end{align}
where we approximate the fluxes on the boundary, Eqs.\eqref{eq:ctsfluxes}, by the numerical fluxes
\begin{align}\label{eq:numfluxes}
\begin{split}
	F_\ihalf^{n} = (U_\ihalf^{n})^+ \rho_i^{n} + (U_\ihalf^{n})^- \rho_{i+1}^{n},\\
	G_\ihalf^{n} = (V_\ihalf^{n})^+ \eta_i^{n} + (V_\ihalf^{n})^- \eta_{i+1}^{n},
\end{split}
\end{align}
using $(\cdot)^+:=\max(\cdot, 0)$ and $(\cdot)^-:=\min(\cdot, 0)$ to denote the positive part and the negative part, respectively. The velocity is discretised by centred differences:
\begin{align}
\label{eq:numvelocities}
	U_\ihalf^{n} = -\dfrac{\xi_{i+1}^{n} - \xi_i^{n}}{\Delta x}, \qquad \text{and} \qquad V_\ihalf^{n} = -\dfrac{\zeta_{i+1}^{n} - \zeta_i^{n}}{\Delta x}.
\end{align}
Here we have set
\begin{align}
	\label{eq:xi_direct}
	\begin{split}
	\xi_i^{n} &:=	\Delta x \sum_k \big(W_{11}^{i-k} \rho_k^{n} +  W_{12}^{i-k} \eta_k^{n}\big)\, + \epsilon (\rho_i^{n}+\eta_i^{n}),\\
	\zeta_i^{n} &:=	\Delta x \sum_k \big(W_{22}^{i-k} \eta_k^{n} +  W_{21}^{i-k} \rho_k^{n}\big)\, +\epsilon (\rho_i^{n}+\eta_i^{n}),
	\end{split}
\end{align}
where $W_{ij}^{l-k} = W_{ij}(x_l - x_k)$, for $i,j=1,2$.
\end{subequations}
This scheme has proven very robust for one species, and under a (more restrictive) CFL condition we can also prove the following result.

\begin{proposition}[Non-negativity preservation]
Consider system (\ref{eq:our_system}) with initial da\-ta $\rho_{0}, \eta_0\geq0$. Then for all $n \in \N$ the cell averages obtained by the finite volume method \eqref{eq:scheme} satisfy
$
	\rho_i^{n}, \eta_i^n \geq 0,
$
granted that the following CFL condition is satisfied
\begin{align*}
	\Delta t \leq \frac{\Delta x}{2 \max (\|U^n\|_\infty,  \|V^n\|_\infty)}.
\end{align*}
\end{proposition}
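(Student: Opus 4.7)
The plan is to prove this by induction on $n$. The base case $n=0$ is immediate, since the cell averages are integrals of the non-negative initial data. For the induction step, I would assume $\rho_i^n,\eta_i^n\ge0$ for all $i$ and rewrite the update \eqref{eq:timestep} as a convex-type combination of these cell averages.

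Substituting the numerical fluxes \eqref{eq:numfluxes} into \eqref{eq:timestep}, I would regroup terms so that each neighbouring cell appears with a single coefficient:
\begin{align*}
\rho_i^{n+1} &= \rho_i^n \Bigl[ 1 - \tfrac{\Delta t}{\Delta x}\bigl((U_\ihalf^n)^+ - (U_\imhalf^n)^-\bigr)\Bigr] \\
&\quad + \tfrac{\Delta t}{\Delta x}(U_\imhalf^n)^+ \rho_{i-1}^n - \tfrac{\Delta t}{\Delta x}(U_\ihalf^n)^- \rho_{i+1}^n.
\end{align*}
Since $(\,\cdot\,)^+\ge0$ and $(\,\cdot\,)^-\le0$, the coefficients multiplying $\rho_{i-1}^n$ and $\rho_{i+1}^n$ are non-negative, and by the induction hypothesis these two contributions are non-negative. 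It therefore suffices to ensure the bracketed coefficient of $\rho_i^n$ is non-negative.

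To that end I would use the elementary bounds $(U_\ihalf^n)^+\le\|U^n\|_\infty$ and $-(U_\imhalf^n)^-\le\|U^n\|_\infty$, which give
\begin{align*}
(U_\ihalf^n)^+ - (U_\imhalf^n)^- \le 2\|U^n\|_\infty.
\end{align*}
The stated CFL condition $\Delta t \le \Delta x/(2\max(\|U^n\|_\infty,\|V^n\|_\infty))$ then guarantees that the bracket is at least zero, so $\rho_i^{n+1}\ge0$. The argument for $\eta_i^{n+1}\ge0$ is identical with $U$ replaced by $V$ and $\rho$ by $\eta$, and the CFL condition has been stated precisely so that both inequalities hold simultaneously.

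There is no real obstacle here: the proof is essentially a standard upwind/monotonicity computation. The only points worth highlighting are that the upwind splitting $(U)^\pm$ is exactly what makes the neighbouring contributions enter with non-negative weights regardless of the sign of the discrete velocities \eqref{eq:numvelocities}, and that the factor of $2$ in the CFL condition is needed because a single interface velocity may simultaneously contribute positively through $(U_\ihalf^n)^+$ and $-(U_\imhalf^n)^-$ at the same cell. The discrete velocities $U^n,V^n$ defined via \eqref{eq:numvelocities}-\eqref{eq:xi_direct} depend on $(\rho^n,\eta^n)$, which makes the CFL bound implicit, but this does not affect the one-step argument carried out above.
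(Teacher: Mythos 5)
Your proof is correct and follows essentially the same route as the paper: an induction step in which the update is rewritten as a combination with non-negative coefficients for $\rho_{i-1}^n$ and $\rho_{i+1}^n$, with the CFL condition ensuring the coefficient of $\rho_i^n$ is non-negative. You merely make explicit the bound $(U_\ihalf^n)^+ - (U_\imhalf^n)^- \le 2\|U^n\|_\infty$, which the paper leaves implicit.
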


\begin{proof}
Let us assume that $\rho_i^n,\eta_i^n\geq 0$, and we need to show that then $\rho_i^{n+1},\eta_i^{n+1}\geq 0$. 
According to Eqs. (\ref{eq:timestep}, \ref{eq:numfluxes}) we have
\begin{align*}
\rho_i^{n+1}  &= 	\rho_i^{n}  - \frac{\Delta t}{\Delta x} (F_\ihalf^n -F_\imhalf^{ n})\\
&= 	\rho_i^{n} -\frac{\Delta t}{\Delta x}  \bigg[
(U_\ihalf^{n})^+ \rho_i^{n} + (U_\ihalf^{n})^- \rho_{i+1}^{n}- (U_\imhalf^{n})^+ \rho_{i-1}^{n} - (U_\imhalf^{n})^- \rho_i^{n} \bigg].
\end{align*}

We can rearrange the terms so that
\begin{align*}
	\rho_i^{n+1}  &=  	\rho_i^{n} \left(1 - \frac{\Delta t}{\Delta x} \left[(U_\ihalf^{n})^+  - (U_\imhalf^{n})^-\right] \right) \\
	&\qquad + \frac{\Delta t}{\Delta x}(U_\imhalf^{n})^+	\rho_{i-1}^{n} - \frac{\Delta t}{\Delta x}  (U_\ihalf^{n})^- \rho_{i+1}^{n}.
\end{align*}
Clearly, all terms in the second line are non-negative. The first line is non-negative if the CFL condition is satisfied. Application of the same procedure to $\eta_i^{n+1}$ yields the statement.
\end{proof}

\section{\label{sec:numerical_results}Numerical study}
In this section we study system \eqref{eq:our_system} numerically with  emphasis on its long time behaviour. Throughout this chapter we use the self-interaction potentials
\begin{align*}
	W_{11}(x) := W_{22}(x) := x^2/2 \approx 1 - e^{-|x|^2/2} \mbox{ near zero},
\end{align*}
and the cross-interaction potentials
\begin{align*}
	W_{12}(x) =  W_{21}(x) = |x|\approx 1-e^{-|x|} \mbox{ near zero and } W_{12}(x)  = |x|  = -W_{21}(x),
\end{align*}
for the interspecific attractive-attractive and attractive-repulsive case, respectively. This choice of potentials allows us to compute steady states of system \eqref{eq:our_system} explicitly. We find a wide range of different behaviours and properties, including segregation phenomena, for different cross-diffusivities and cross-interactions. Notice that the system is translationally invariant and therefore, if for symmetry considerations we can show that the centres of mass of both species in a stationary state are fixed and equal to some particular value, we can suppose that value to be zero without loss of generality.

From numerical simulations we observe that steady states are compactly supported which motivates this ansatz when computing the profiles analytically. This is also due to the non-linear diffusion of porous medium type in the volume exclusion term. This chapter is subdivided into two sections addressing the mutually attractive case and the attractive-repulsive case, respectively.

\subsection{\label{sec:attrattr}Attractive-attractive case}
Let us begin with the case of attractive interaction between both species, \textit{i.e.} $W_{12} = W_{21}= |x|$. Upon exploring the system numerically, we find a vast variety of stationary patterns, including 
both symmetric and non-symmetric profiles whose occurrence and stability depends on the cross-diffusivity.  

In fact, the coefficient $\epsilon$ of the cross-diffusivity plays a crucial role in the bifurcations 
of these profiles, and will be discussed in the next section.
Then, we study the system as the cross-diffusivity tends to zero and the stability of the steady states --- a matter that  seems closely intertwined with the bifurcations.

\subsubsection{Steady states and behavioural bifurcation}
We begin by introducing the two types of symmetric steady states observed in the attractive-attractive case. Motivated by numerical simulations, we assume that the stationary distributions are compactly supported, \textit{i.e.},
$$
	\supp(\rho) = [-c,c], \qquad \text{and} \qquad \supp(\eta) = [-b,b].
$$
where $0 < b \leq c$. The domain $[-c,-b]\cup[b,c]$ is then only
inhabited by the first species, but not $\eta$. Upon rearranging Eq. \eqref{eq:steadystatecond}, we obtain
\begin{align}
\label{eq:200616_1637}
	\rho(x) = -\frac1\epsilon \big(W_{11} \star \rho + W_{12}\star \eta  - c_1\big).
\end{align}
The two non-local terms $W_{11} \star \rho$ and $W_{12}\star \eta$ can be computed individually. 
First the self-interaction terms becomes
\begin{align}
	\label{eq:SelfInteractionTerm}
	W_{11}\star \rho (x) &= \int W_{11}(x-y) \rho(y)\d y = \frac12m_1 x^2 - M_1 x + \frac12 \bar M_1,
\end{align}
where 
\begin{align*}
m_1 = \int_{\R} \rho(x)\d x, \quad M_1 = \int_{\R} x \rho(x) \d x, \quad \text{and} \quad \bar M_1 = \int_{\R} x^2 \rho(x)\d x,
\end{align*}
are the mass and the first two moments of $\rho$, respectively. Then the cross-interaction term becomes
\begin{align}
	\label{eq:CrossInteractionTerm}
	W_{12}\star \eta(x) = 	\int |x-y| \eta(y)\d y = 
	\left\{
	\begin{array}{ll}
		M_2 - m_2 x, & x\in [-c,-b],\\[0.5em]
		m_2 x - M_2, & x\in  [b,c],
	\end{array}
	\right.
\end{align}
where $m_2, M_2$ denote the mass and the centre of mass of the second species. 
Due to symmetry and translational invariance of the solution, both $M_1$ and $M_2$ 
can be taken as zero without loss of generality. Upon substitution of the non-local terms in~\eqref{eq:SelfInteractionTerm} 
and~\eqref{eq:CrossInteractionTerm}, Eq. \eqref{eq:200616_1637} is simplified into
\begin{align*}
	\rho(x) = -\frac1\epsilon \bigg(\frac12m_1 x^2 + \frac12 \bar M_1 \pm ( - m_2 x) - c_1\bigg),
\end{align*}
where `$+$' is for the case $x \in [-c,-b]$, and `$-$' for $x \in [b,c]$, respectively. 
Using $\rho(\pm c) = 0$ at the boundary (where $\rho+\eta$ vanishes identically), we get
\begin{align}
	\label{eq:210616_0622}
	\rho(x) = 
	\left\{
      \begin{array}{ll}
        -\dfrac{1}{2\epsilon} m_1 (x^2 -c^2) + \dfrac{m_2}{\epsilon}
        (x+c), & x\in [-c,-b],\\[1em]
        -\dfrac{1}{2\epsilon} m_1 (x^2 -c^2) - \dfrac{m_2}{\epsilon}
        (x-c), & x \in [b,c].
		\end{array}
	\right.
\end{align}

Finally, let us consider the interval $[-b,b]$ where both species coexist. Again, $\rho$  satisfies
\begin{align}
	\label{eq:251016_1035}
	\begin{split}
	c_1 &= W_{11} \star \rho + W_{12} \star \eta + \epsilon(\rho + \eta)\\
	&=\frac12 m_1 x^2 + \frac12 \bar M_1 + \epsilon(\rho + \eta) + \int_{-b}^b |x-y| \eta(y)\d y,
	\end{split}
\end{align}
where the cross-interaction term $W_{12}\star\eta$ can be further reduced, according to
\begin{align*}
	\int_{-b}^b |x-y| \eta(y)\d y =  x\int_{-b}^x \eta(y) \d y - \int_{-b}^x y\eta(y)\d y + \int_x^b y\eta(y)\d y -x\int_x^b \eta(y)\d y.
\end{align*}
Notice that all terms on the right side are twice differentiable. Therefore from~\eqref{eq:251016_1035}, $\rho + \eta$ is twice differentiable in $(-b,b)$, and upon differentiating Eq. \eqref{eq:251016_1035} twice we obtain
\begin{equation}
\label{eq:200616_1714}
	0 = (\rho + \eta)'' + \frac2\epsilon\eta + \frac{m_1}{\epsilon},
\end{equation}
and similarly from the second equation in~\eqref{eq:steadystatecond}
\begin{equation}
\label{eq:200616_1714_1}
	0 = (\rho + \eta)'' + \frac2\epsilon\rho + \frac{m_2}{\epsilon}.
\end{equation}
The system of equations~\eqref{eq:200616_1714} and~\eqref{eq:200616_1714_1} can be solved 
by first introducing the decoupled system for $u:= \rho + \eta$ and $v:=\rho-\eta$, giving by
\begin{align*}
	2u'' + \frac2\epsilon u + \frac{m_1 + m_2}{\epsilon} &= 0,\\
	\frac2\epsilon v + \frac{m_2 - m_1}{\epsilon} &= 0.
\end{align*}
Thus, the solutions $\rho$ and $\eta$ are obtained as
\begin{align}
	\label{eq:200616_1716}
	\rho(x) =\frac{\hat u_1}{2} \sin\left(\frac{x}{\sqrt{\epsilon}}\right) + \frac{\hat u_2}{2}\cos\left(\frac{x}{\sqrt{\epsilon}}\right) - \frac{m_2}{2},\\
		\label{eq:200616_1716_1}
	\eta(x) =\frac{\hat u_1}{2} \sin\left(\frac{x}{\sqrt{\epsilon}}\right) + \frac{\hat u_2}{2}\cos\left(\frac{x}{\sqrt{\epsilon}}\right) - \frac{m_1}{2}.
\end{align}
In fact, due to symmetry there holds $\hat u_1 = 0$, and Eqs.(\ref{eq:200616_1716},\ref{eq:200616_1716_1}) can be simplified to 
\begin{align}\label{eq:200616_1726}	
	\rho(x) = \frac{\hat u_2}{2}\cos\left(\frac{x}{\sqrt{\epsilon}}\right) - \frac{m_2}{2},\qquad
	\eta(x) = \frac{\hat u_2}{2}\cos\left(\frac{x}{\sqrt{\epsilon}}\right) - \frac{m_1}{2}.
\end{align}
Hence the symmetric steady states are determined uniquely by  three parameters, $\hat u_2$, $b$ and $c$,
which are governed by algebraic equations.
Since $\eta$ is only supported on $[-b,b]$, the condition for the total mass of $\eta$ becomes
\begin{equation*}
	m_2 = \int_{-b}^b \eta(x)\d x = \sqrt{\epsilon}\hat{u}_2\sin \left(\frac{b}{\sqrt{\epsilon}}\right)-m_1b,
\end{equation*}
which yields
\begin{equation*}
	\hat u_2 = \frac{m_2 + m_1 b}{\sqrt{\epsilon} \sin\left(\dfrac{b}{\sqrt{\epsilon}}\right)}.
\end{equation*}

From  Eqs.(\ref{eq:210616_0622}, \ref{eq:200616_1726}), the condition for the total mass of $\rho$ becomes
\begin{multline}
\label{eq:200616_1728}
	m_1 = \left(\int_{-c}^{-b} +\int_{-b}^{b} +\int_{b}^{c}\right) \rho(x)\d x \cr
	=\frac{m_1(b+2c)(c-b)^2}{3\epsilon}+\frac{m_2(c-b)^2}{\epsilon} 
	+\sqrt{\epsilon}\hat{u}_2\sin \left(\frac{b}{\sqrt{\epsilon}}\right)-m_2b.
\end{multline}
When $\hat u_2$ is eliminated, Eq. (\ref{eq:200616_1728}) provides a relation between $b$ and $c$, \textit{i.e.},
\begin{equation}
	\label{eq:200616_1730_1}
	3 \epsilon m_1 + 3 \epsilon m_2 b = 3 \epsilon (m_2 + m_1 b) + (c - b)^2 (3 m_2 + 2 c m_1 + m_1 b).
\end{equation}
Finally, consider the continuity of the sum of the densities $\rho+\eta$ 
at $x=b$ (cf. Definition \ref{def:steadystate}), 
\begin{equation*}
	\lim_{x\uparrow b} \big(\rho(x) + \eta(x)\big) = \lim_{x \downarrow b} \big(\rho(x) + \eta(x)\big),
\end{equation*}
whence
\begin{equation}
\label{eq:200616_1730_2}
m_1 (c^2 - b^2) + \epsilon m_1 + \epsilon m_2 + 2 m_2 (c - b) = 2 \sqrt{\epsilon}  (m_2 + m_1 b) \cot\left(\dfrac{b}{\sqrt{\epsilon}}\right).
\end{equation}

Therefore $b$ and $c$ are in the zero locus of Eqs. (\ref{eq:200616_1730_1}, \ref{eq:200616_1730_2}) that are numerically solved, cf. Figure \ref{fig:AttrAttrBatman_left}.  Then the shape of the steady state is given by two parabola profiles on the parts only inhabited by the first species and cosine profiles where both species coexist: 
\begin{align*}
	\rho(x) =
	\left\{
	\begin{array}{ll}
		 -\dfrac{1}{2\epsilon} m_1 (x^2 -c^2) + \dfrac{m_2}{\epsilon} (x+c),	& x \in [-c,-b],	\\[0.3cm]
		 \dfrac{\hat u_2}{2}\cos\left(\dfrac{x}{\sqrt{\epsilon}}\right) - \dfrac{m_2}{2}, & x \in [-b,b],\\[0.3cm]
		 -\dfrac{1}{2\epsilon} m_1 (x^2 -c^2) - \dfrac{m_2}{\epsilon} (x-c),	& x \in [b,c],
	\end{array}
	\right.
\end{align*}
and
\begin{align*}
	\eta(x) = \frac{\hat u_2}{2}\cos\left(\frac{x}{\sqrt{\epsilon}}\right) - \frac{m_1}{2},
\end{align*}
on $[-b,b]$. Figure \ref{fig:AttrAttrBatman_right} shows an excellent agreement between numerical and analytical steady states.
\begin{figure}[!ht]
	\centering
    \subfigure[The root of Eqs.(\ref{eq:200616_1730_1}, \ref{eq:200616_1730_2}) determines the support.]{
	\label{fig:AttrAttrBatman_left}
	\includegraphics[width=0.45\textwidth]{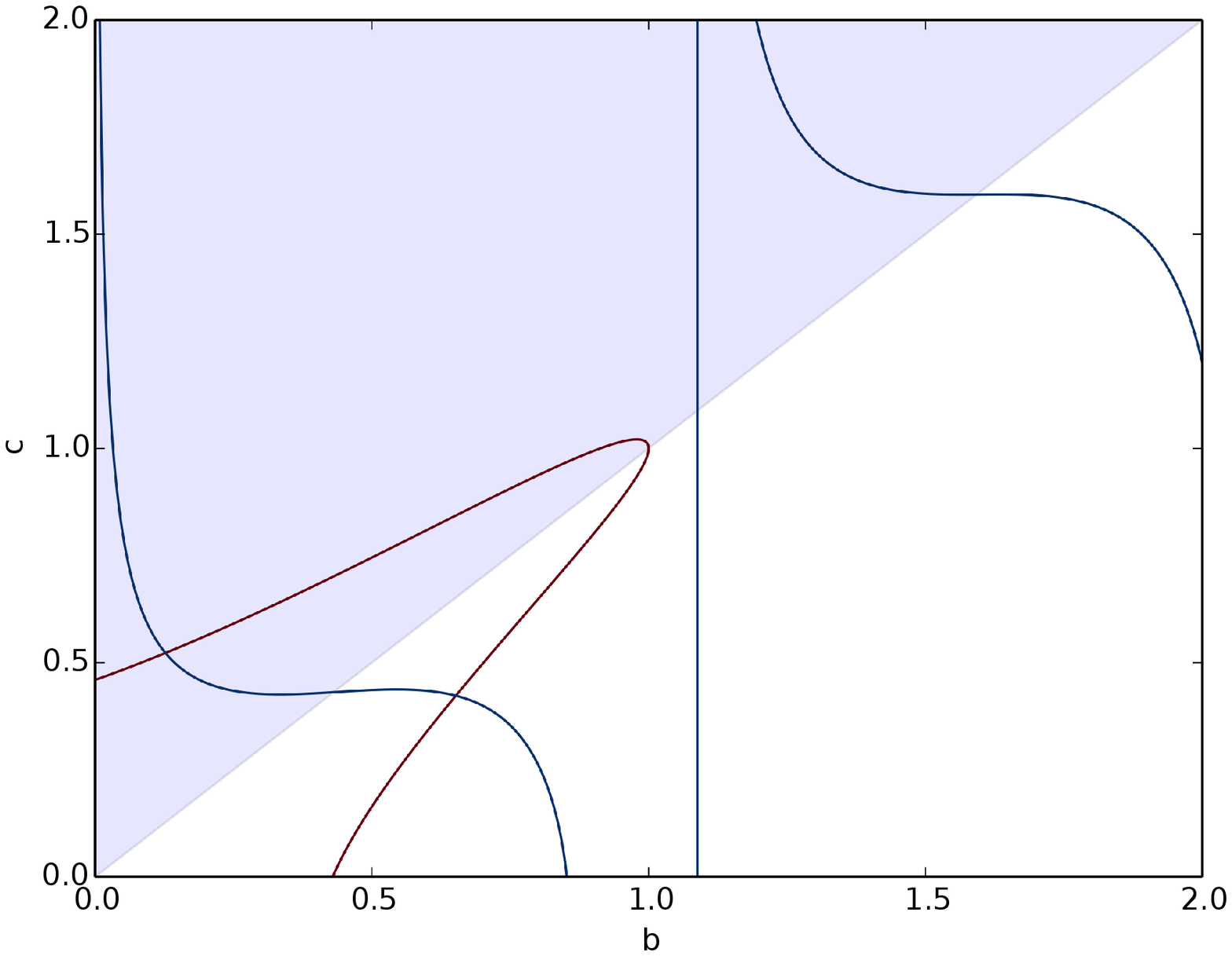}
 	}
    \hspace{0cm}
    \subfigure[$\ \epsilon = 0.12, m_1= 0.6, m_2=0.1$]{
    \label{fig:AttrAttrBatman_right}
	\includegraphics[width=0.45\textwidth]{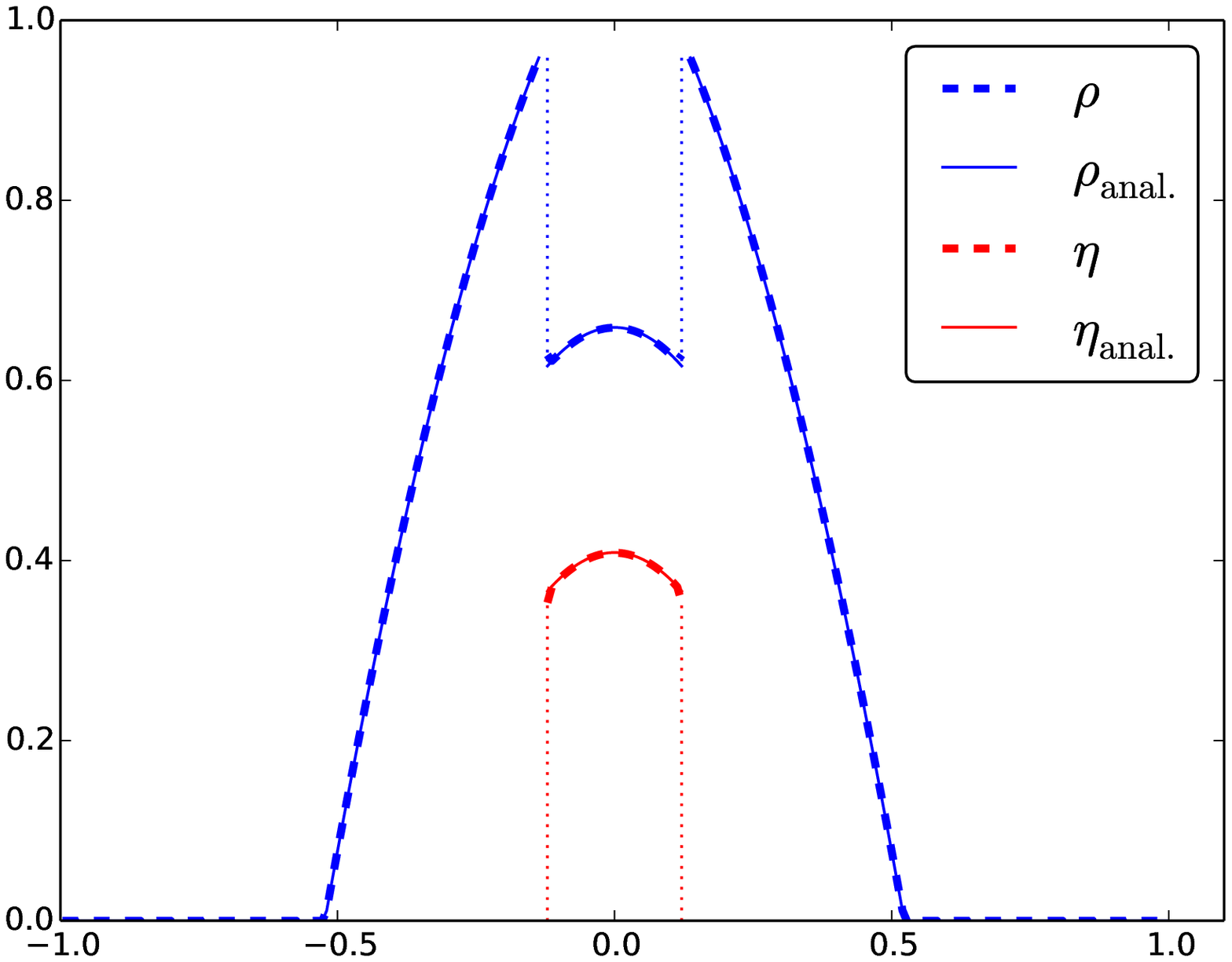}
    }
    \caption{Left: mass condition (red line) and the continuity of the sum (blue line) give rise to two equations for the support. The shaded area is the condition $c\geq b$. Right: analytical (straight lines) and numerical (dashed lines) Batman profile agree perfectly.}
    \label{fig:AttrAttrBatman}
\end{figure}
Let us remark that Eq. \eqref{eq:200616_1730_1} implies $b=c$ in the case of $m_1 = m_2$. As a consequence both species completely overlap and the profile is just that of a cosine, cf. Figure \ref{fig:AttrAttrCompleteOverlap}.
\begin{figure}[!ht]
	\centering
    \subfigure[The root of Eqs.(\ref{eq:200616_1730_1}, \ref{eq:200616_1730_2}) determines the support.]{
	\label{fig:AttrAttrCompleteOverlap_left}
	\includegraphics[width=0.45\textwidth]{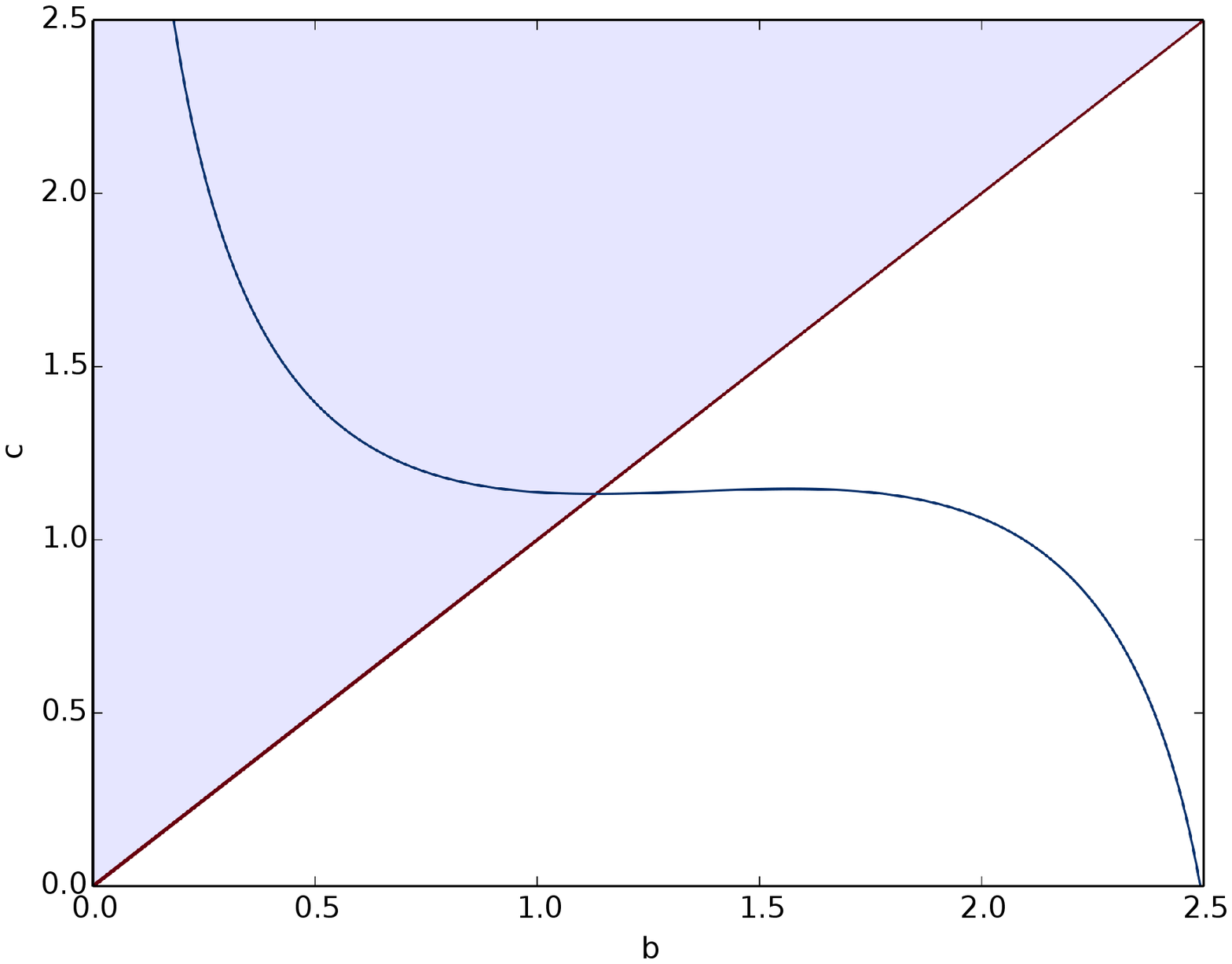}
	}
    \hspace{0cm}
    \subfigure[$\ \epsilon = 1, m_1=1, m_2=1$]{
    \label{fig:AttrAttrCompleteOverlap_right}
	\includegraphics[width=0.45\textwidth]{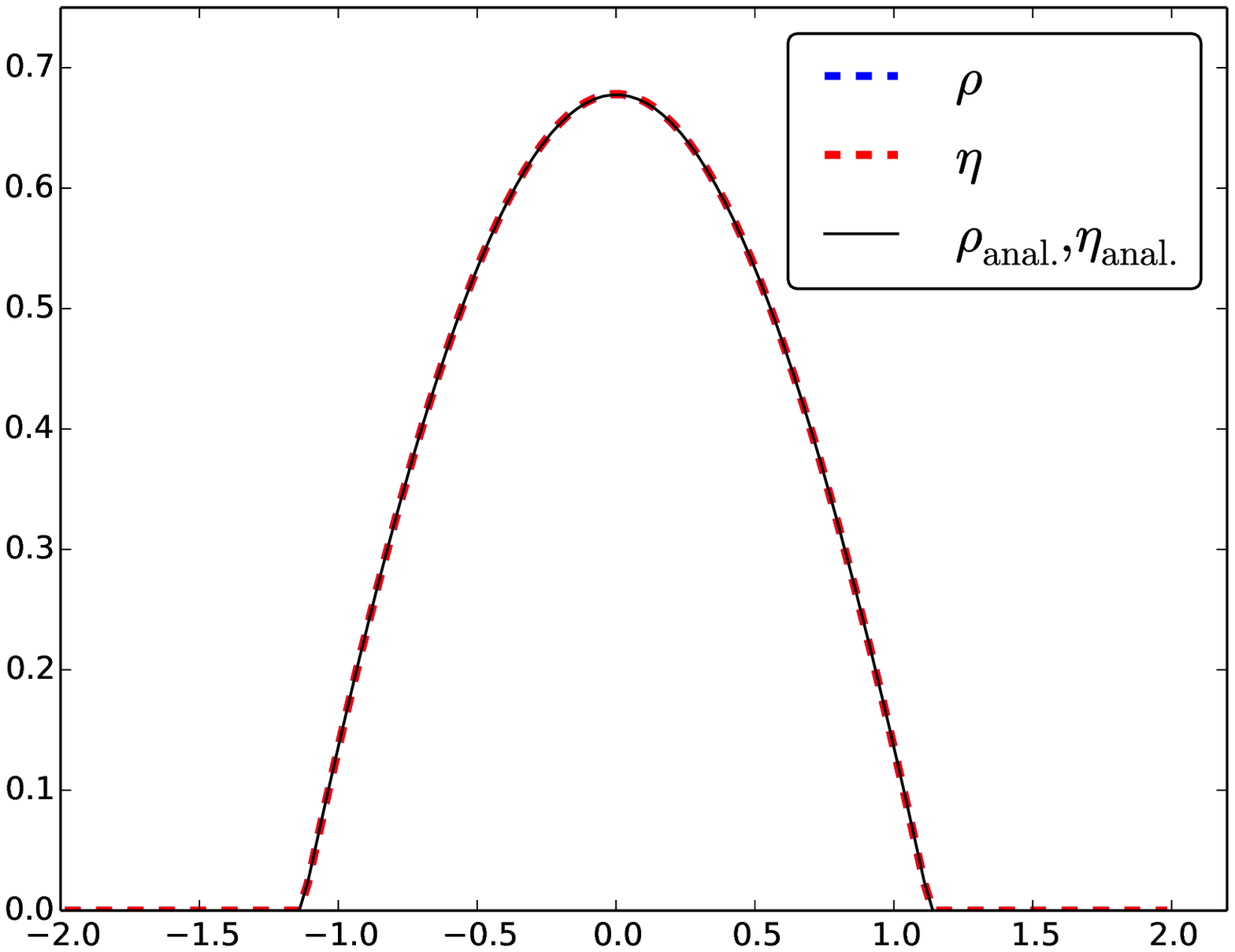}
    }
    \caption{Stationary distribution in the case of same masses. Left: mass condition and  continuity of the sum determine the support. Right: the analytical (straight lines) and the numerical (dashed lines) steady state agree perfectly.}
    \label{fig:AttrAttrCompleteOverlap}
\end{figure}
Numerical simulations show that the Batman profiles are the only symmetric stationary distribution in a certain range of cross-diffusivities, namely $(0,\epsilon^{(1)}]$. For $\epsilon \in (\epsilon^{(1)},\epsilon^{(2)}]$,  a new family of profiles (called \emph{the second kind})  emerges coexisting with the Batman profiles in this range, cf. Figure \ref{fig:secondkindprofile}. Finally, for $\epsilon> \epsilon^{(2)}$ only  profiles of the second kind prevail.
\begin{figure}[!ht]
	\centering
	\subfigure{
	\includegraphics[width=0.47\textwidth]{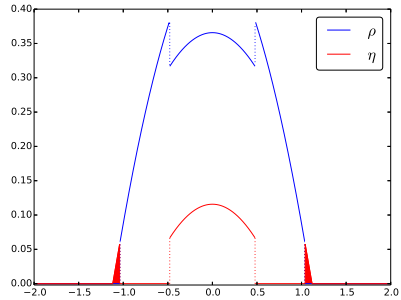}
	}
	\caption{For $\epsilon>\epsilon^{(1)}$ a second kind of profile surfaces. In fact, there is a whole family of steady states parameterised by a the mass fraction, $p\in [p_{\mathrm{min}}, p_{\mathrm{max}}]$, in the corners (filled in red).}
	\label{fig:secondkindprofile}
\end{figure}
Since the steady states are a state of balance between diffusion and attractive interactions, the second kind of profiles can be seen as states in which the attractive force is not strong enough to ensure the formation of a single group for $\eta$ as observed in the Batman profiles.

Similarly to the Batman profiles, we may determine parameters and their governing equations for profiles of second kind. In the symmetric case, using~\eqref{eq:steadystatecond} the profiles are given by
\begin{align*}
		\rho(x) =\! \left\{\!
		\begin{array}{rl}
			\rho^L(x)&=\frac1\epsilon\!\left(\frac{m_2}{2}(d^2-c^2) + m_1(d-c) +\frac{m_1}{2}(c^2-x^2) + (1-p)m_2(c+x)\right)\\
			\rho^M(x)&=\frac{B}{2} \cos\left(\frac{x}{\sqrt{\epsilon}}\right)-\frac{m_2}{2}\\
			\rho^R(x)&=\frac1\epsilon\!\left(\frac{m_2}{2}(d^2-c^2) + m_1(d-c) +\frac{m_1}{2}(c^2-x^2) + (1-p)m_2(c-x)\right)
		\end{array}
		\right.
	\end{align*}
	where $\mathrm{supp}(\rho^L)=[-c,-b], \mathrm{supp}(\rho^M)=[-b,b]$, $\mathrm{supp}(\rho^R)= [b,c]$, and $p$ is the fraction of mass in the corners of $\eta$, cf. Figure \ref{fig:secondkindprofile}, (areas filled in red). Similarly,
	\begin{align*}
		\eta(x)=\left\{\begin{array}{rl}
			\eta^L(x) &= \frac1\epsilon\big(\frac{m_2}{2}(d^2-x^2) +m_1(d+x)\big)\\
			\eta^M(x)&=\frac{B}{2}\cos\left(\frac{x}{\sqrt{\epsilon}}\right) -\frac{m_1}{2}\\
			\eta^R(x) &= \frac1\epsilon\big(\frac{m_2}{2}(d^2-x^2) +m_1(d-x)\big)
		\end{array}
		\right.
		\hfill
	\end{align*}
	where $\mathrm{supp}(\eta^L)=[-d,-c], \mathrm{supp}(\eta^M)=[-b,b]$, and $\mathrm{supp}(\eta^R)= [c,d]$.
It is apparent that there are five unknowns $b,c,d$ for the support, $B$ for the amplitude in regions of coexistence, and $p$ for the mass fraction. Correspondingly, we find four conditions in order to determine all parameters but $p$:
\begin{align*}
	pm_2  		=  2 \int_c^d \eta^R(x)\d x, \qquad \text{and} \qquad	(1-p)m_2 	=	\int_{-b}^b \eta^M(x)\d x,
\end{align*}
for the mass near the corners and on the middle interval $[-b,b]$, respectively. Similarly, we know that
\begin{align*}
	m_1 = \int_{-c}^{c} \rho(x) \d x,\qquad \text{and} \qquad \lim_{x\uparrow b} \sigma(x) =\lim_{x\downarrow b} \sigma(x) \qquad \text{and} \qquad \lim_{x\uparrow c} \sigma(x) =\lim_{x\downarrow c} \sigma(x),
\end{align*}
for the mass of $\rho$ and the continuity of the sum $\sigma=\rho+\eta$ at $x=c$ and $x=b$. 
Since $p$ parameterises a family of solutions and describes  both branches (as envelope) of the bifurcation diagram, cf. Figure \ref{fig:bifurcation}, we are interested in finding the conditions leading to $p_\mathrm{min}(\epsilon), p_\mathrm{max}(\epsilon)$ in the diagram, Figure \ref{fig:bifurcation}.
\begin{figure}[!ht]
	\centering
	\includegraphics[width= 0.55\textwidth]{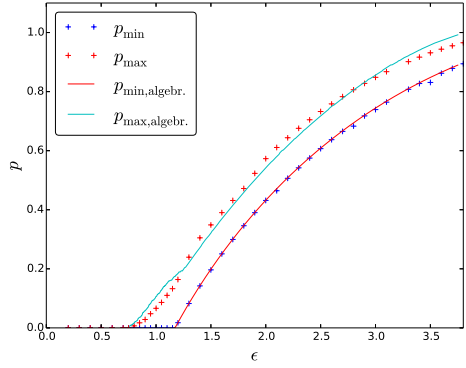}	
	\caption{Depending on the cross-diffusivity we observe different symmetric steady states. The dashed curve shows the minimal mass fraction in the corner and the dotted line the maximal mass fraction leading to a stable stationary distribution.}
	\label{fig:bifurcation}
\end{figure}
In order to determine the bifurcation diagram we run simulations with two different types of initial data -- on the one hand we start the system with $\supp(\eta)\subset \supp(\rho)$, on the other hand we initialise the system such that $\eta$ is supported around $\rho$, cf. first row of Figure \ref{fig:numerically_determining_bifurcation}. 
\begin{figure}[ht!]
	\centering
	\subfigure{
		\includegraphics[width=0.4\textwidth]{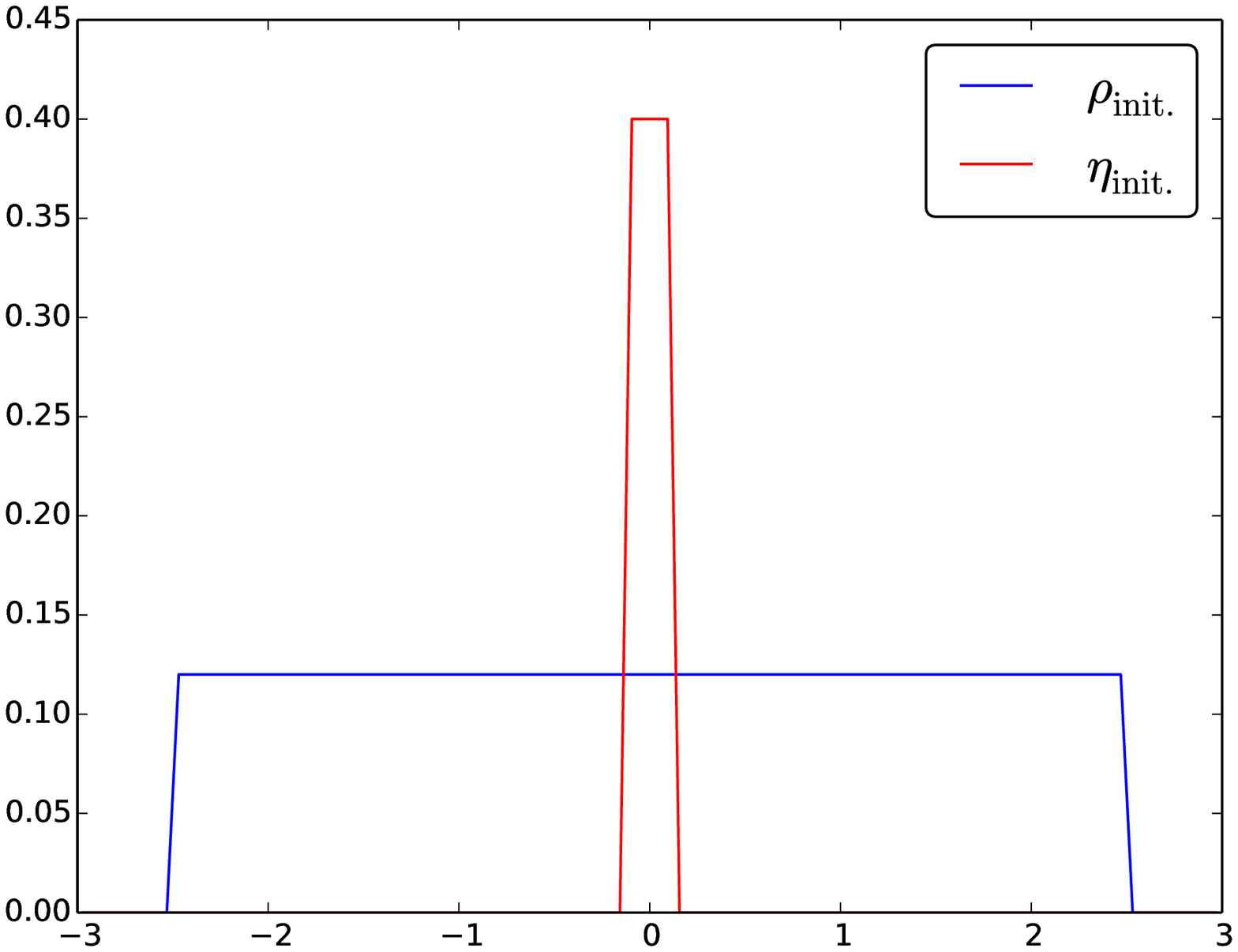}
	}
	\subfigure{
		\includegraphics[width=0.4\textwidth]{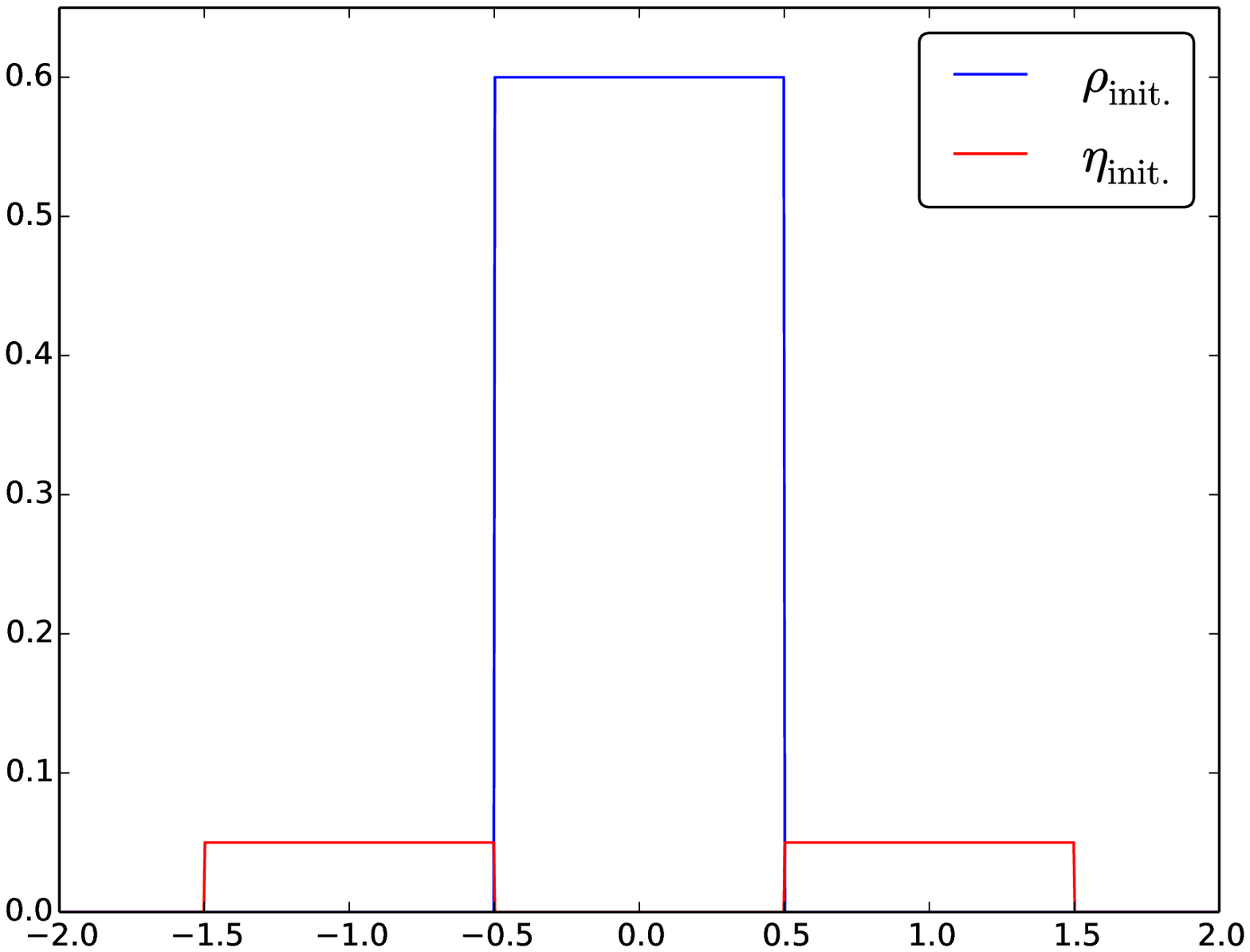}
	}
	\subfigure{
		\includegraphics[width=0.4\textwidth]{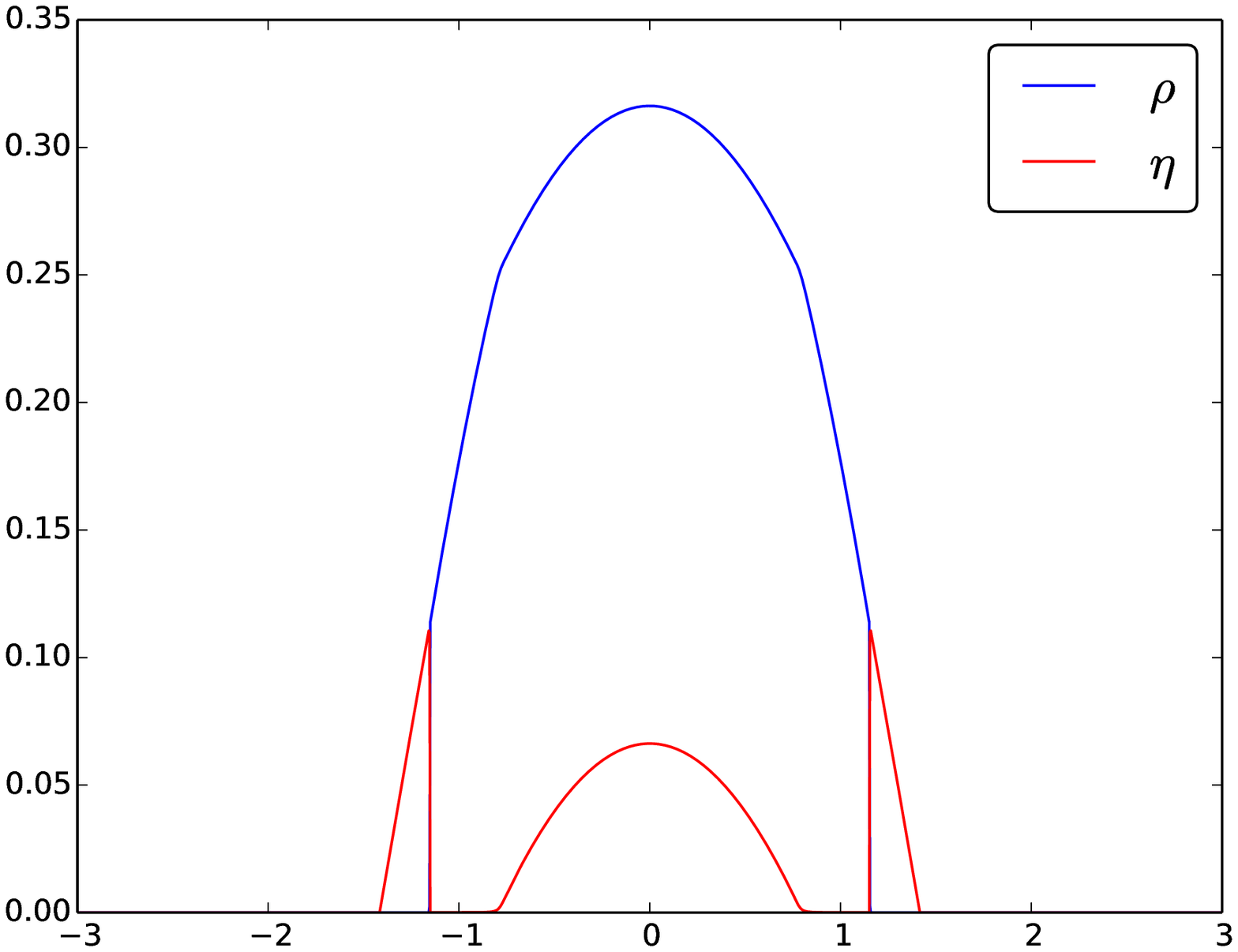}
	}
	\subfigure{
		\includegraphics[width=0.4\textwidth]{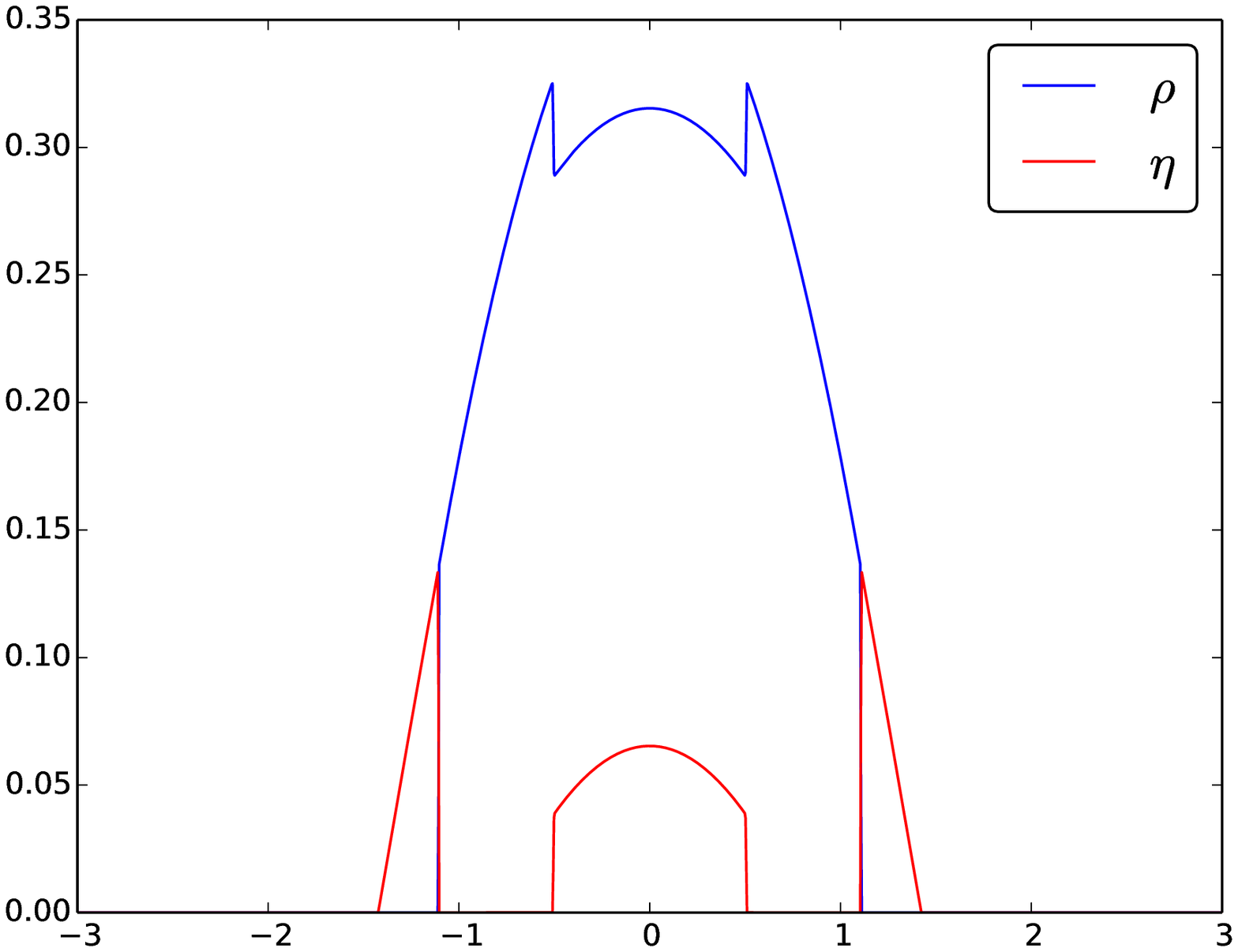}
	}
	\caption{In all four graphs the masses are $m_1 = 0.1$, $m_2=0.6$, and the cross-diffusivity is $\epsilon = 1.7$. The first row depicts two different initial data -- one (left) where $\eta$ is included in $\rho$, and one (right) where $\eta$ surrounds $\rho$. In the second row we present the corresponding steady states. Albeit having a similar make-up, they differ in their respective mass fraction of the corner, $p$. The left graph  gives the minimal mass fraction $p_\mathrm{min}$ while the right graph gives the maximal, $p_\mathrm{max}$, respectively, cf. Figure \ref{fig:bifurcation}.}
	\label{fig:numerically_determining_bifurcation}
\end{figure}
The second row shows the stationary distribution asymptotically achieved with the respective initial data. We note that the mass fraction of $\eta$ in the corners is different for both simulations albeit having used the same cross-diffusivity. The mass fraction in the left graph corresponds to $p = p_{\mathrm{min}}$ and the mass fraction in the right graph to $p=p_\mathrm{max}$, respectively. Now we want to give conditions determining the envelopes $p_\mathrm{min}(\epsilon),p_\mathrm{max}(\epsilon)$ of Figure \ref{fig:bifurcation}.

Let us impose non-negativity of $\eta$ at $x=b$, \textit{i.e.} $\eta(b) \geq 0$. This is a reasonable assumption which is also reflected in the numerical simulations, cf. Figure \ref{fig:conditions_a}. The figure shows steady states corresponding to the left initial data in Figure \ref{fig:numerically_determining_bifurcation} as $\epsilon$ increases. While we observe a discontinuity of $\eta$ at $x=b$ for small $\epsilon$, there is a critical value where $\eta(b)=0$, for all $\epsilon > \epsilon^{(1)}$.
\begin{figure}[ht!]
	\subfigure[$\eta(b)= 0$.]{
		\includegraphics[width=0.47\textwidth]{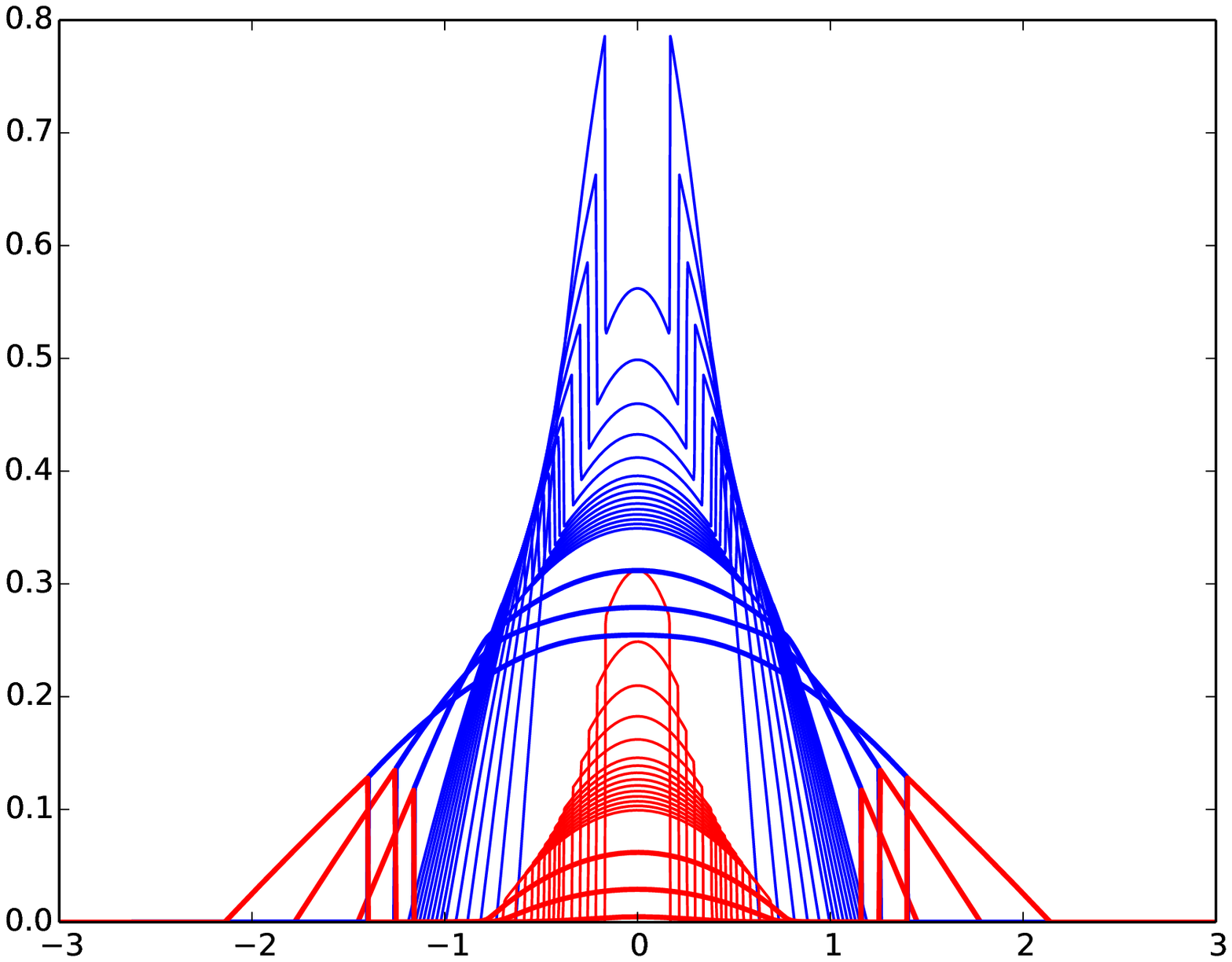}
		\label{fig:conditions_a}
	}
	\subfigure[$u_2(c)>0$.]{
		\includegraphics[width=0.47\textwidth]{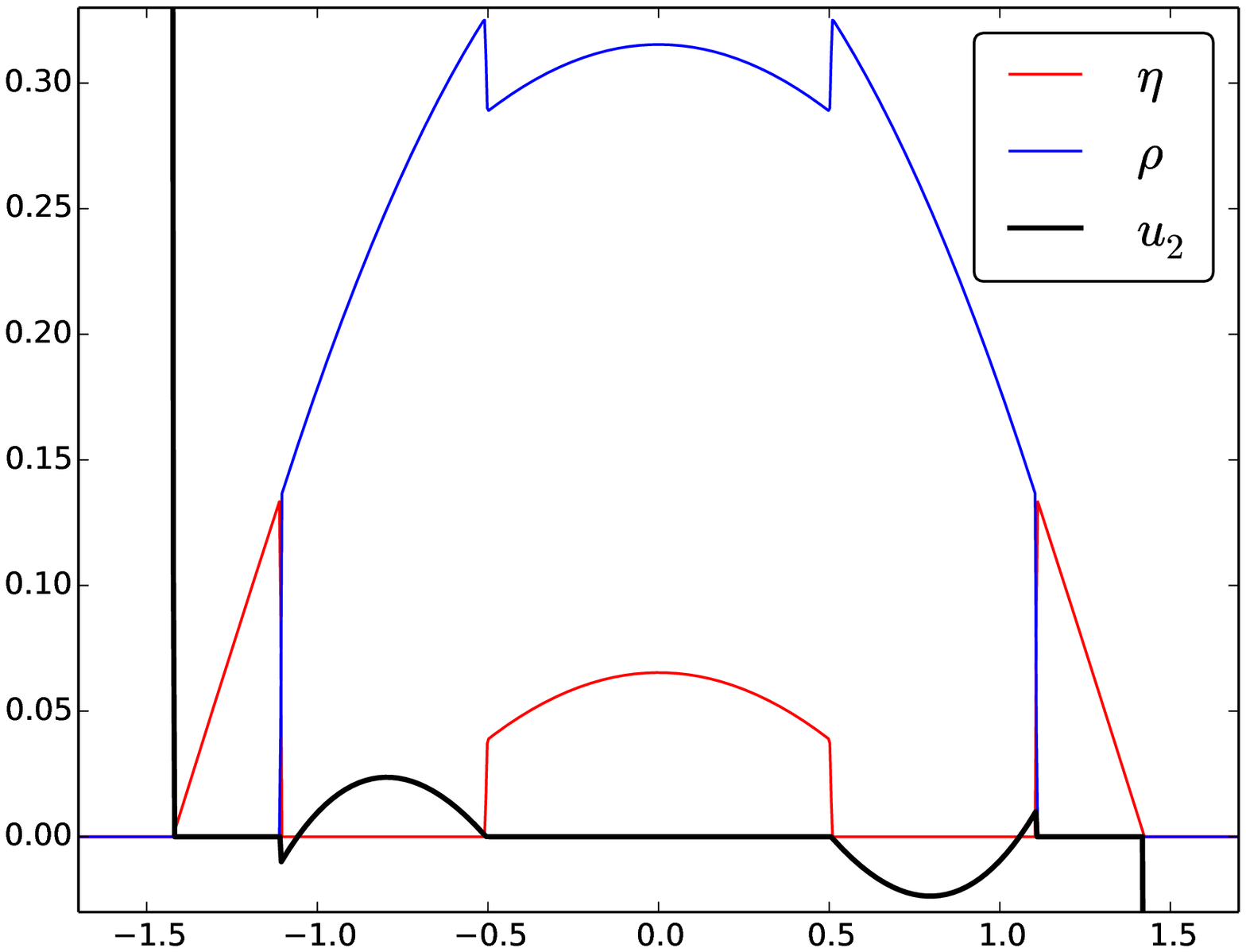}
		\label{fig:conditions_b}
	}
	\caption{Conditions for lower and upper boundary of the bifurcation diagram.}
	\label{fig:conditions}
\end{figure}
For the upper envelope we impose that the velocity field $u_2$ is non-negative  at $x=c$ since otherwise any small perturbation will render the stationary state unstable, \textit{i.e.} mass would get transported into the interior, cf. Figure \ref{fig:conditions_b}. These two conditions describe both envelopes in Figure \ref{fig:bifurcation}.

\paragraph{Vanishing diffusion regime}
In this section we study the case of Batman profiles as $\epsilon \rightarrow 0$. Recall the two equations for $b$ and $c$,
\begin{align}
  (c-b)^2(3m_2+2cm_1+m_1b)+3\epsilon(m_2-m_1+m_1b-m_2b) &= 0, \label{eq:supp1}\\
  m_1(c^2-b^2)+\epsilon(m_1+m_2)+2m_2(c-b)
  -2\sqrt{\epsilon}(m_2+m_1b)\cot\frac{b}{\sqrt{\epsilon}} &= 0. \label{eq:supp2}
\end{align}
When $\epsilon$ is small, both $b$ and $c$ are  $O(\sqrt{\epsilon})$,
suggesting
\[
  b = \epsilon^{1/2}\left( b_0 + \epsilon^{1/2} b_1 + \epsilon b_2 +
  \cdots\right),\qand
  c = \epsilon^{1/2}\left( c_0 + \epsilon^{1/2} c_1 + \epsilon c_2 +
  \cdots\right).
\]
Upon substitution of the asymptotic expansions into Eq. \eqref{eq:supp1} and \eqref{eq:supp2}, the leading order coefficients $b_0$
and $c_0$ satisfy
\begin{align*}
  m_2-m_1+(c_0-b_0)^2m_2=0,\qand b_0-c_0+\cos b_0 =0,
\end{align*}
whence
\[
  b_0 = \arccos \sqrt{\frac{m_1-m_2}{m_2}},\quad \text{and} \quad
  c_0 =  \sqrt{\frac{m_1-m_2}{m_2}}+
  \arccos \sqrt{\frac{m_1-m_2}{m_2}}.
\]
Notice that both densities in the Batman profiles will converge to a Dirac Delta at zero with the respective masses while keeping their shape with this described asymptotic scaling for their supports.

\paragraph{Asymmetric profiles}
So far we only discussed symmetric steady states. However, there is an equally rich variety of non-symmetric stationary states, cf. Figure \ref{fig:nonsymmetric_steadystates_1} and Figure \ref{fig:nonsymmetric_steadystates_2}.
\begin{figure}[ht!]
	\center
	\subfigure[Asymmetric profile for $m_1 = 1$, $m_2 = 2$, and $\epsilon=3$.]
	{
		\includegraphics[width=0.4\textwidth]{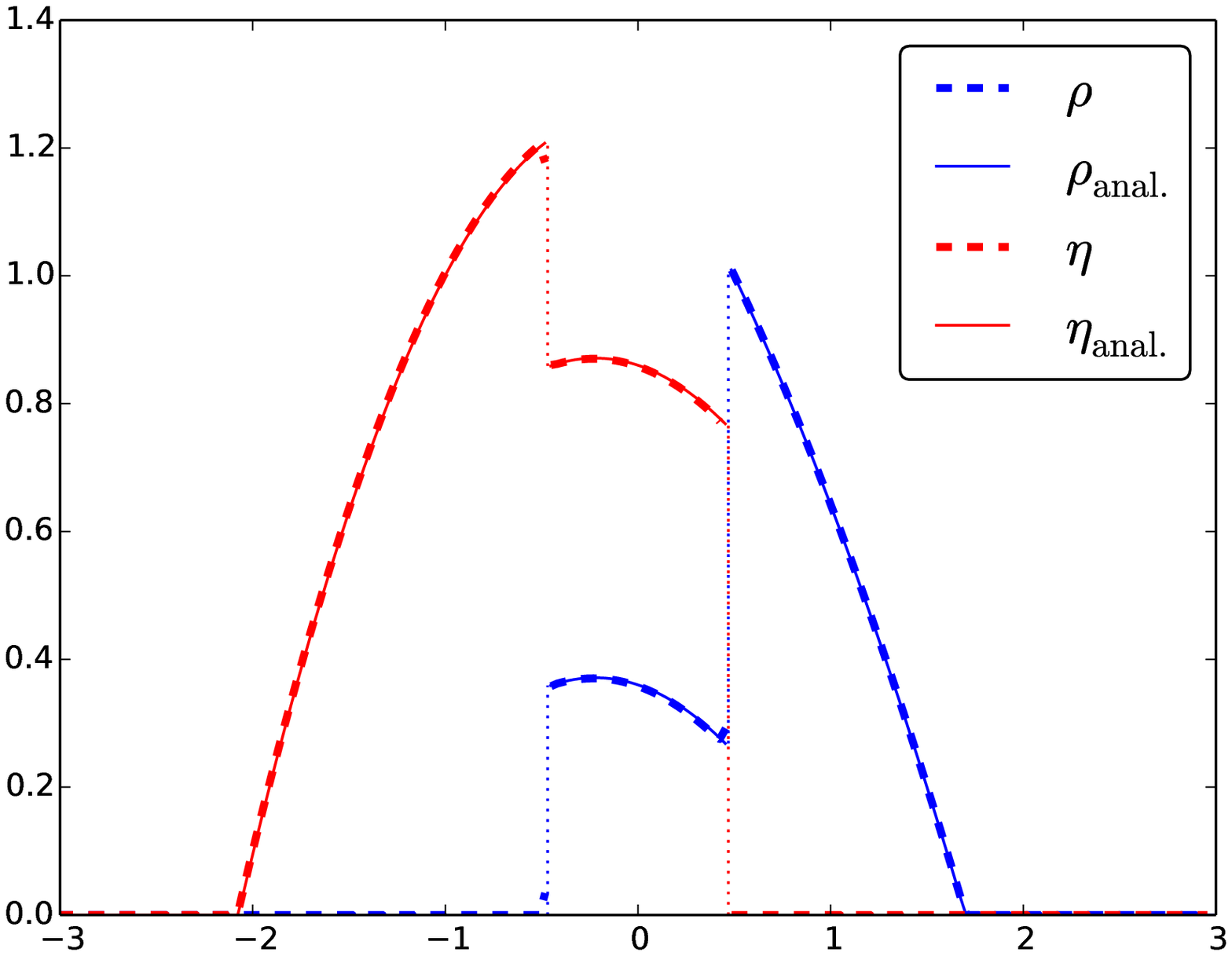}
	}
	\subfigure[Antisymmetric profile for $m_1 = m_2 = 1$ and $\epsilon=3$.]
	{
		\includegraphics[width=0.4\textwidth]{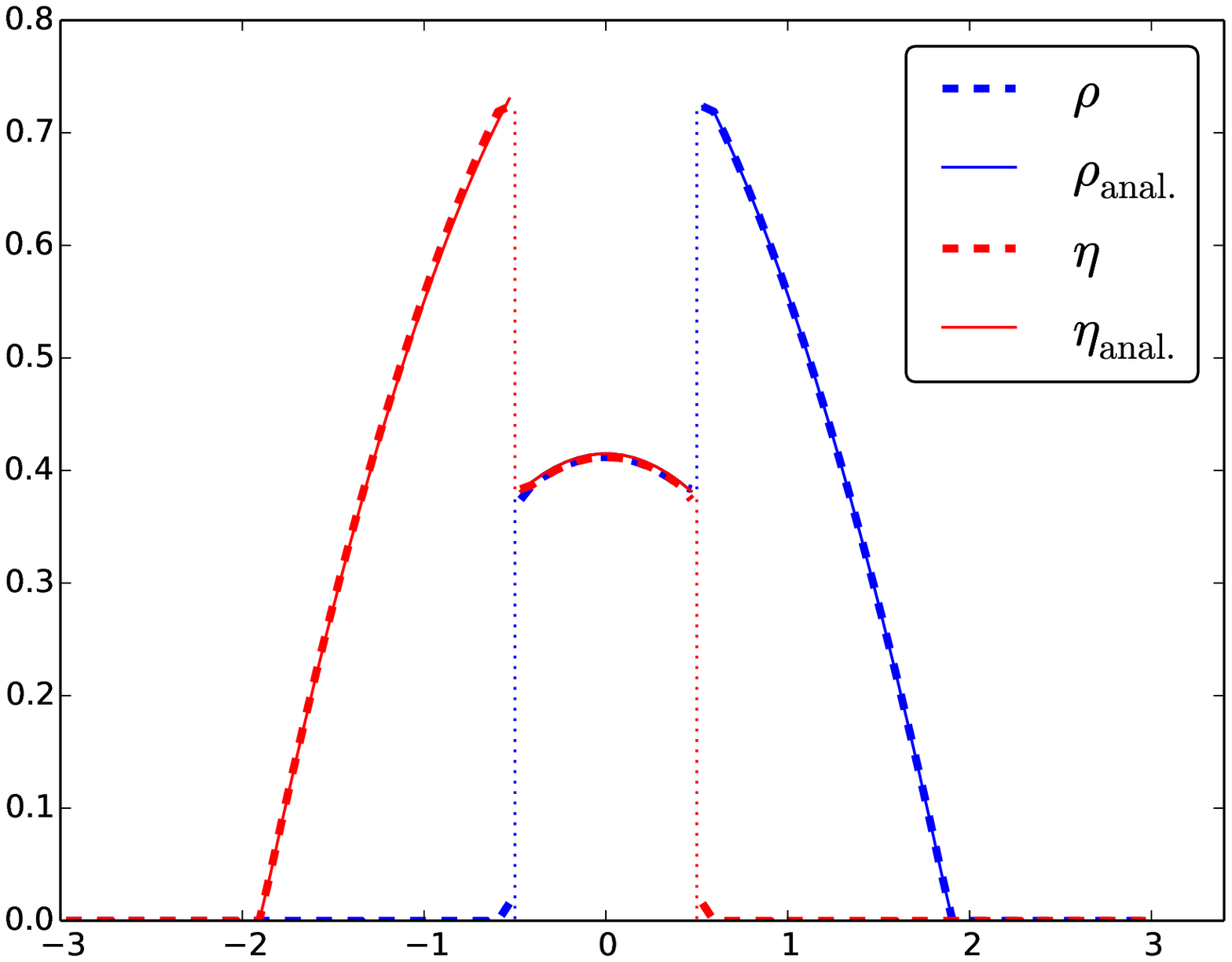}
	}
	\caption{Non-symmetric profiles for different masses (left) and equal masses (right), respectively.}
	\label{fig:nonsymmetric_steadystates_1}
\end{figure}
 In Figure \ref{fig:nonsymmetric_steadystates_1} we display the cases where the support only consists of three pieces -- two regions inhabited by only one species and the middle one where both species coexist. Figure \ref{fig:nonsymmetric_steadystates_2} on the other hand shows three further examples of asymmetric steady states suggesting the existence of an infinite family of solution of asymmetric steady states. All states have the same qualitative profile in common but differ in their respective supports and mass distribution.
\begin{figure}[ht!]
	\centering
	\subfigure
	{
		\includegraphics[width=0.3\textwidth]{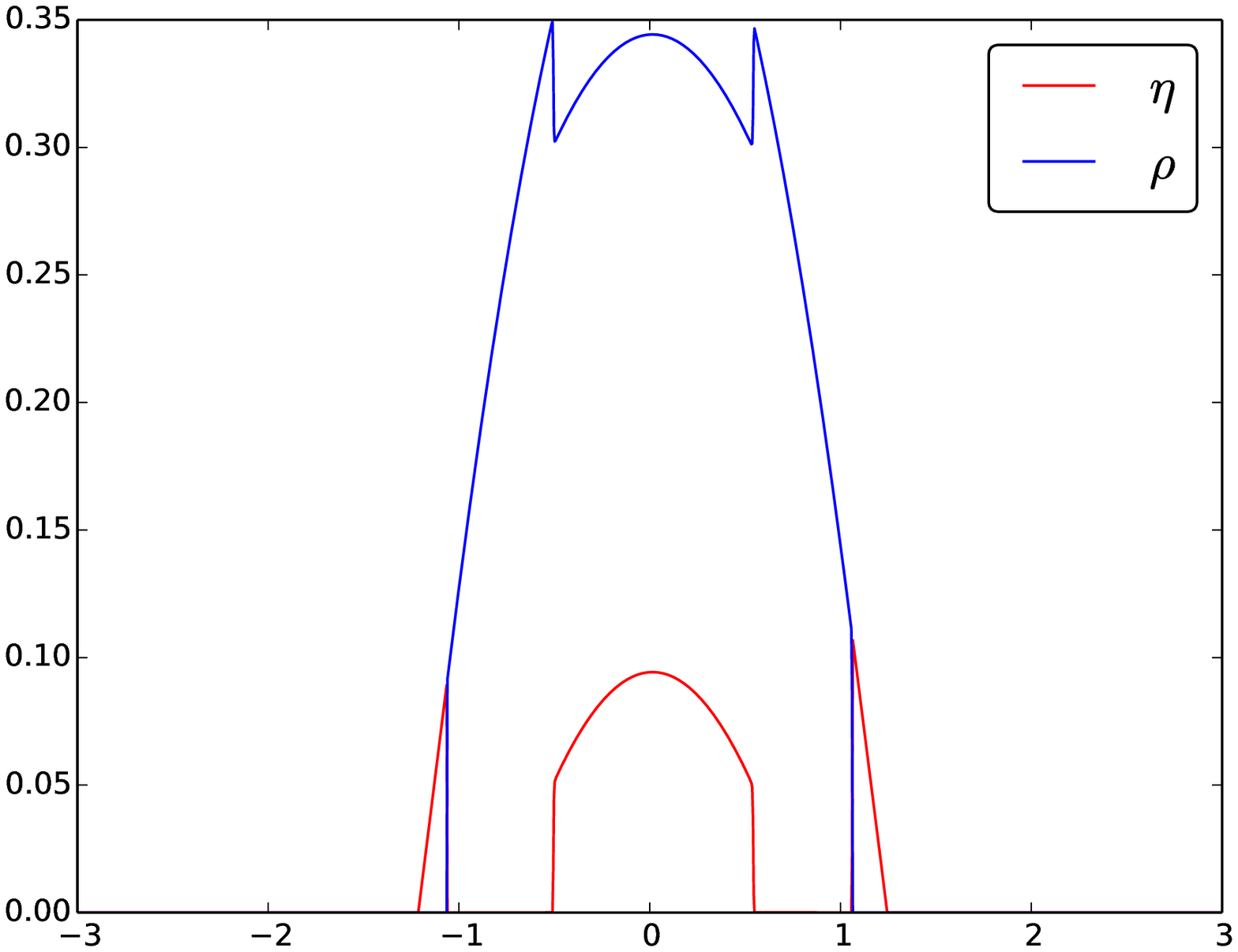}
	}
	\hspace{0.02cm}
	\subfigure
	{
		\includegraphics[width=0.3\textwidth]{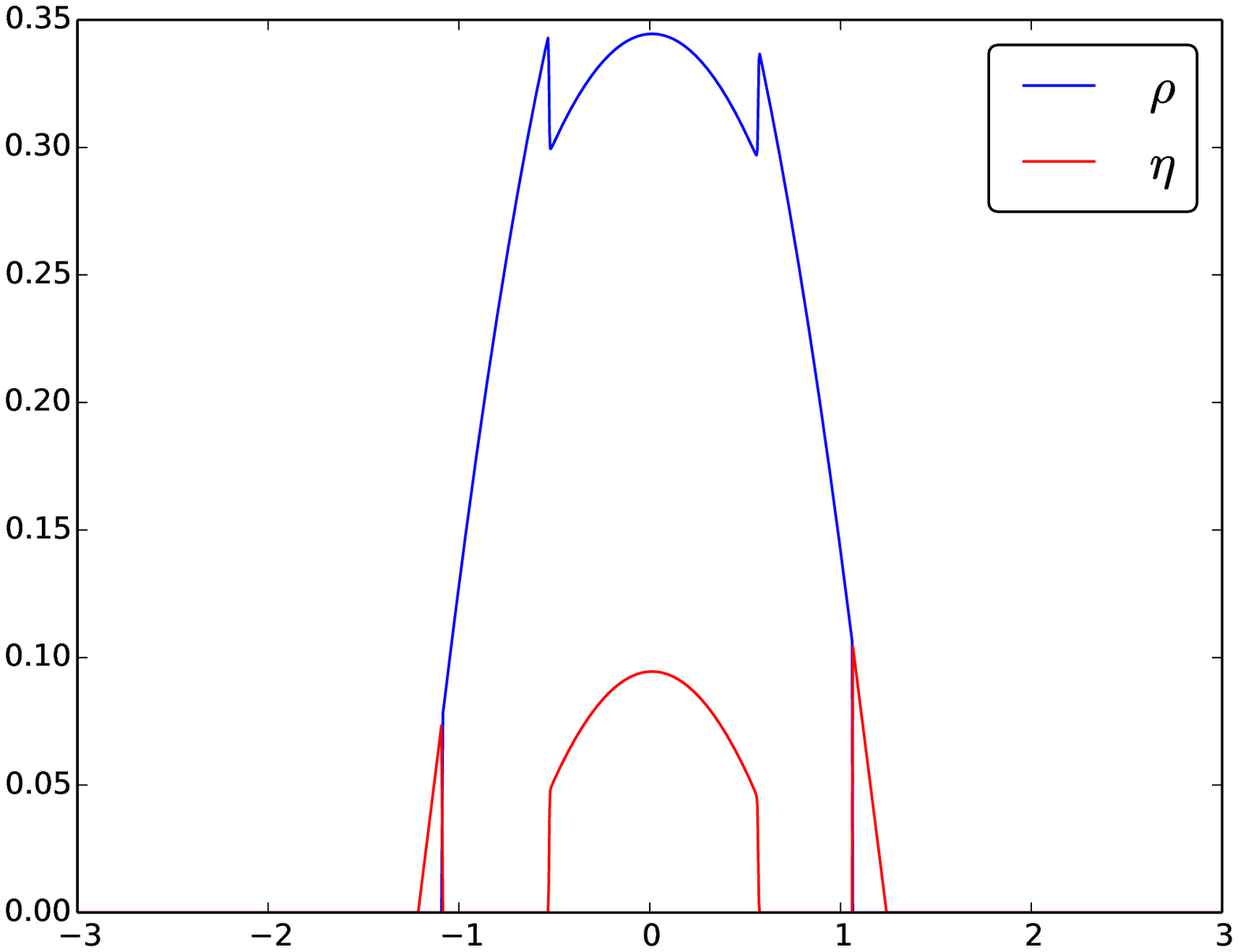}
	}
	\hspace{0.02cm}
	\subfigure
	{
		\includegraphics[width=0.3\textwidth]{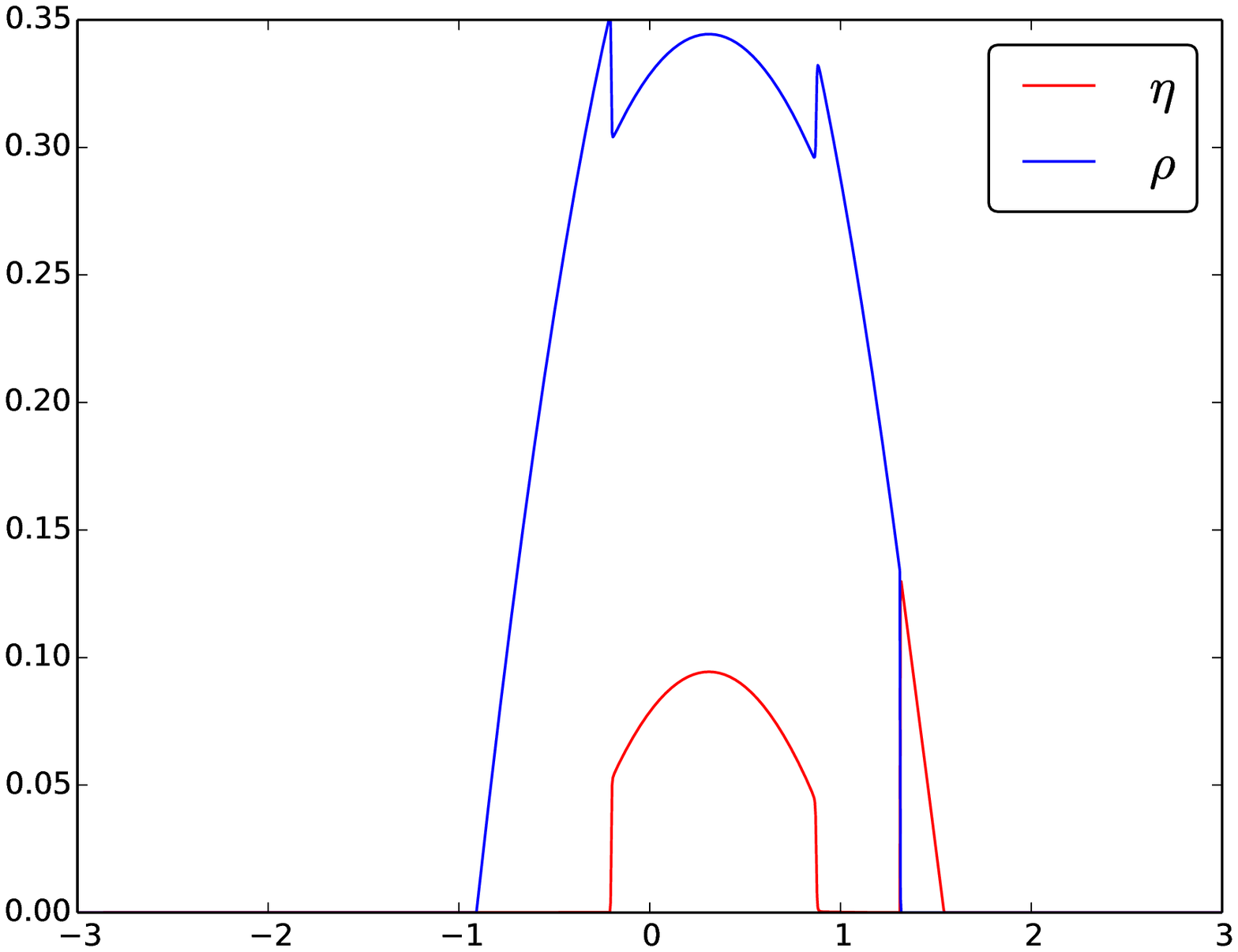}
	}
	\caption{There is an infinite family of asymmetric steady states depending only on the initial data. Here $m_1=0.6$, $m_2=0.1$, and $\epsilon=1.2$ The mass in the left corner decreases from left to right.}
	\label{fig:nonsymmetric_steadystates_2}
\end{figure}
We do not observe asymmetric profiles for $0<\epsilon<\epsilon^{(1)}$ independent of the masses $m_1$ and $m_2$. Only for larger cross-diffusivities, $\epsilon>\epsilon^{(1)}$, asymmetric profiles can be observed. Moreover, there is a whole family of asymmetric profiles as can be seen in Figure \ref{fig:nonsymmetric_steadystates_2}. This is similar to the case of symmetric stationary states, parameterised by the mass fraction $p$.

\paragraph{Stability of steady states and symmetrising effect}
Let us now discuss the numerical stability of the symmetric steady states. Here the bifurcation point $\epsilon^{(1)}$ plays an important role, for the system exhibits a symmetrising effect whenever the cross-diffusivity lies below the critical one, in the sense that there is only one symmetric steady state attracting any initial data.

We fixed $\epsilon \in (0,\epsilon^{(1)})$ and chose $\rho_0=2m_1\mathbbm{1}_{[-0.5,0]}$ and $\eta_0=2m_2\mathbbm{1}_{[0,0.5]}$ for all combinations of  masses  of the form $(m_1,m_2) = 0.1\cdot\,(i,j)$ for $i,j=1\ldots 10$. In all cases we observe that there is only one attractor, namely the Batman profile of the form given in Figure \ref{fig:AttrAttrBatman_right} and Figure \ref{fig:AttrAttrCompleteOverlap_right} in the case $m_1=m_2$, respectively. For $\epsilon>\epsilon^{(1)}$ the system is not symmetrising anymore and small perturbations lead to different stationary states. This can be seen if $p$ is varied in $[p_\mathrm{min}, p_\mathrm{max}]$, for it leads to different states. A similar argument holds for the asymmetric states, by shifting mass from one corner into the other, cf. Figure \ref{fig:nonsymmetric_steadystates_2}.

\subsection{Attractive-repulsive case}
In this section we present the attractive - repulsive case, \textit{i.e.} $W_{12}= |x| = -W_{21}$. Then the steady states have segregated densities, as asserted by the following proposition.
\begin{proposition}[Completely segregated steady states.]
\label{prop:segregation}
Let $(\rho, \eta)$ be a stationary solution of system \eqref{eq:our_system}. Then both species do not intermingle, \textit{i.e.} there cannot be connected components of $\supp(\rho) \cap \supp(\eta)$	with non empty interior.
\end{proposition}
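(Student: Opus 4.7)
The plan is to argue by contradiction, exploiting the specific form of the potentials $W_{11}=W_{22}=x^2/2$ and $W_{12}=|x|=-W_{21}$ so that the characterisation in the preceding proposition reduces to a simple algebraic constraint. I would suppose that some connected component of $\supp(\rho)\cap\supp(\eta)$ has non-empty interior and pick an open interval $I$ inside that interior. Since $I$ is connected and sits inside both $\supp(\rho)$ and $\supp(\eta)$, it lies in a single connected component of each, so both identities in \eqref{eq:steadystatecond} hold on $I$ with fixed constants $c_1,c_2$.

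Next I would differentiate each identity twice in the sense of distributions on $I$, using the elementary facts that $(x^2/2 \star \mu)''$ equals the total mass of $\mu$ (a constant) and $(|x|\star \mu)'' = 2\mu$, since $|x|''=2\delta$. Because of the opposite signs in $W_{12}$ and $W_{21}$, the two resulting equations on $I$ become
\begin{align*}
m_1 + 2\eta + \epsilon(\rho+\eta)'' &= 0,\\
m_2 - 2\rho + \epsilon(\rho+\eta)'' &= 0.
\end{align*}
Subtracting one from the other cancels the $(\rho+\eta)''$ term and yields $\rho+\eta \equiv (m_2 - m_1)/2$ on $I$, which in turn forces $(\rho+\eta)''=0$ there. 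Plugging this back into the first equation gives $\eta \equiv -m_1/2$ on $I$, contradicting $\eta\geq 0$ unless $m_1=0$; but in that case $\rho\equiv 0$ and $\supp(\rho)=\emptyset$, which contradicts $I\subset\supp(\rho)$.

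The only step that requires care is the distributional bookkeeping, which I expect to be the main obstacle rather than the algebra. One has to verify that the second-derivative identities for $x^2/2\star\rho$ and $|x|\star\eta$ are legitimate on the open set $I$, and that subtracting the two equations to eliminate $(\rho+\eta)''$ is a valid distributional manipulation. Both points are covered by the built-in regularity $\rho+\eta\in H^1(\R)$ from the definition of steady state, which makes $(\rho+\eta)''$ a well-defined element of $H^{-1}(\R)$, while the convolutions of the $L^1\cap L^\infty$ densities with the kernels $x^2/2$ and $|x|$ admit exactly the distributional second derivatives used above. Once these identities are secured, the pointwise (a.e.) conclusion $\eta=-m_1/2$ on $I$ is unambiguous and the proof closes.
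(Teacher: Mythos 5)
Your proof is correct and follows essentially the same route as the paper's: assume a coexistence interval, differentiate the two steady-state identities twice using $(x^2/2\star\mu)''=\mathrm{mass}$ and $(|x|\star\mu)''=2\mu$, subtract to get $\rho+\eta$ constant, and conclude $\eta=-m_1/2<0$, a contradiction. If anything you are slightly more careful than the paper, which writes $\eta=-m_1$ (a harmless factor-of-two slip) and does not address the degenerate case $m_1=0$ that you dispose of explicitly.
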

\begin{proof}
Suppose the interior of a connected componente of $\supp(\rho)\cap\supp(\eta)$ is not empty. We know that both species satisfies Eqs. \eqref{eq:steadystatecond} in that connected component:
\begin{align*}
	\begin{split}
	c_1 &= W_{11}\star \rho + W_{12}\star\eta + \epsilon(\rho + \eta),\\
	c_2 &= W_{22}\star \eta + W_{21}\star\rho + \epsilon(\rho + \eta).
	\end{split}
\end{align*}
Similar arguments as above imply that the interaction terms are twice differentiable in this interval, thus we differentiate twice and get
\begin{align}
\label{eq:150816_1722}
	0 = m_1 + 2\eta + \epsilon (\rho + \eta)'' \qand 0 = m_2 - 2\rho + \epsilon (\rho + \eta)''.
\end{align}
Upon subtracting both equations we deduce $0 = m_1 - m_2 + 2(\rho+ \eta)$, or equivalently
$\rho + \eta = \frac{m_2-m_1}{2}$.
But then Eq. \eqref{eq:150816_1722} reduces to
$\eta =-m_1$ and $\rho =m_2$.
Clearly this is a contradiction to the non-negativity of the densities: $0\leq \eta =-m_1<0$.
Thus the species do not intermingle.
\end{proof}
\subsubsection{Steady states}
This section is dedicated to studying the steady states of the system with attractive-repulsive cross-interactions. Due to numerical simulations and the previous proposition we make the following assumption on the support
\begin{align*}
	\supp(\rho) = [-c,c], \qquad \text{and} \qquad \supp(\eta) = [a,b] \cup [d,e],
\end{align*}
where $a<b \leq -c <c \leq d <e$ are some real numbers. Using Eqs. \eqref{eq:steadystatecond} we proceed similar as above, cf. Eqs. (\ref{eq:SelfInteractionTerm}, \ref{eq:CrossInteractionTerm}), to obtain
\begin{align*}
	\eta^L(x) = \frac1\epsilon \left(c_2 - \frac12 m_2 x^2 + M_2 x -\frac12 \bar M_2  + M_1 -m_1 x\right),
\end{align*}
for shape of the second species on the left part of the support and
\begin{align*}
	\eta^R(x) = \frac1\epsilon \left(c_2 - \frac12 m_2 x^2 + M_2 x -\frac12 \bar M_2  - M_1 + m_1 x\right),
\end{align*}
for the right part, respectively. Similar as above, we can see that the interaction terms are twice differentiable, therefore differentiating Eq. \eqref{eq:steadystatecond} in the support of $\rho$ twice yields $0 = m_1 + \epsilon(\rho)''$, and thus
\begin{align*}
	\rho(x) = -\frac{1}{2\epsilon}m_1 x^2 + \beta x + \gamma,
\end{align*}
with  $\beta,\gamma$ to be determined. Again we impose the continuity of the sum at the boundary points of each part of the supports, \textit{i.e.}
\begin{align}
	\label{eq:continuitycond}
	\eta^L(a) = 0, \quad \eta^L(b) = \rho(-c), \quad \rho(c) = \eta^R(d), \quad \text{and} \quad \eta^R(e) = 0,
\end{align}
where $\rho(-c) = 0$ if $b<-c$, and $\rho(c) = 0$ if $c<d$. We compute
\begin{align*}
\begin{split}
	\eta^L(x) &= \eta^L(x) - \eta^L(a) =-\frac{1}{2\epsilon} m_2 (x^2 -a^2) + \frac{M_2 - m_1}{\epsilon} (x-a),
\end{split}
\end{align*}
and analogously
\begin{align*}
\begin{split}
	\eta^R(x) &= \eta^R(x) - \eta^R(e) =-\frac{1}{2\epsilon} m_2 (x^2 -e^2) + \frac{M_2 + m_1}{\epsilon} (x-e).
\end{split}
\end{align*}
Concerning the first species, the parameters $\beta, \gamma$ are determined by the continuity condition Eq. \eqref{eq:continuitycond} and we obtain
\begin{align*}
	\rho(x) = -\frac{1}{2\epsilon}m_1 (x^2-c^2) + \frac{\eta^R(d) - \eta^L(b)}{2c} x + \frac{\eta^R(d) + \eta^L(b)}{2}.
\end{align*}
We can see that there are six unknowns, namely $a,b,c,d,e, \text{and }M_2$ with a total of five conditions:
\begin{align*}
\begin{gathered}
	\int \rho \d x = m_1, \quad \int \eta^L \d x  = \frac12 m_2, \quad \int \eta^R\d x= \frac12 m_2,\\
	\int x \rho \d x = M_1, \quad \int x \eta \d x = M_2,
\end{gathered}
\end{align*}
by imposing half of the mass of $\eta$ to each side of $\rho$.

\subsubsection{Case of strict segregation}
Let us start by discussing the case
\begin{align*}
	\eta^L(b) = \rho(\pm c) = \eta^R(d) =0.
\end{align*}
Then the condition on the mass yields
\begin{subequations}
\label{eq:support_of_segregated_attrrep_steadystates}
\begin{align*}
	\int_{-c}^c \rho(x) \d x = m_1 \qquad \Rightarrow \qquad c = \sqrt[3]{\frac32 \epsilon}.
\end{align*}
We can solve $\eta^L(b) = 0$ for $a$,
\begin{align}
	\label{eq:2408216_1229}
	a= {\frac {-{m_2}\,b+2\,{M_2}-2\,{m_1}}{{m_2}}}.
\end{align}
Since half of the mass is located to the left of the first species, we get
\begin{align}
	\label{eq:30082016_0940}
	\int_{a}^{b}\eta^L(x) \d x=\frac{m_2}{2} \quad \Rightarrow \quad b={\frac {\frac12\,\sqrt [3]{6\epsilon}{m_2}+{M_2}-{m_1}}{{m_2}}},
\end{align}
where we used Eq. \eqref{eq:2408216_1229}. Similarly, we solve $\eta^R(d) = 0$ for $e$ to obtain
\begin{align}
	\label{eq:240816_1425}
	e= {\frac {-{m_2}\,d+2\,{M_2}+2\,{m_1}}{{m_2}}}.
\end{align}
Using this expression we compute
\begin{align}
	\label{eq:15092016_2014}
	\int_{d}^{e} {\eta^R}(x) \,\d x=\frac{m_2}{2} \quad \Rightarrow \quad d = {\frac {-\frac12\,\sqrt [3]{6\,\epsilon}{m_2}+{M_2}+{m_1}}{{m_2}}}.
\end{align}
\end{subequations}
So we have determined $c,b,d$ depending only on the masses and the second order moments of the second species, $M_2$. We can substitute the values into Eqs. (\ref{eq:2408216_1229}, \ref{eq:240816_1425}) to determine $a$ and $e$.

\paragraph{Critical $\epsilon$ and maximal $M_2$}
	We are interested in a condition determining as to when segregation of species occurs. In fact there is a critical value of the cross-diffusivity, $\epsilon_c$, such that there only exist adjacent steady states for $\epsilon > \epsilon_c$. For $0< \epsilon <\epsilon_c$ strictly segregated steady states occur if $|M_2| < M_{2,\rm{max}}$, where $M_2 = 0$ corresponds to the symmetric case. Figure \ref{fig:CritEpsMaxM2} displays this behaviour.
	
	Let us derive an expression for $\epsilon_c$ and $M_{2,\rm{max}}$. For a fixed $\epsilon$ we may compute $M_{2,\rm{max}}$.
	We begin with the case $c=-b$. We can solve for the critical $M_2$, \textit{i.e.}
	\begin{align}
		\label{eq:16092016_1845}
		M_{2,\rm{max}} = -\frac12m_2\sqrt[3]{\epsilon}(\sqrt[3]{12}+\sqrt [3]{6})+{m_1}.
	\end{align}
	Similarly, we can solve equation $c=d$ for $M_2$, which gives
	\begin{align}
		\label{eq:16092016_1845_1}
		\bar M_2 = \frac12m_2\sqrt[3]{\epsilon}(\sqrt[3]{12}+\sqrt [3]{6})-{m_1} = -M_{2,\rm{max}}.
	\end{align}
	Thus, the parameter $M_2$ can vary in the range $[-M_{2,\rm{max}}, M_{2,\rm{max}}]$. Then the critical value of $\epsilon$ makes this interval degenerate, \textit{i.e.} it is given by the condition $M_{2,\rm{max}} = 0$. This way we obtain
	\begin{align*}
		\epsilon_c = \frac49\,{\frac {m_1^{3} \left( \sqrt [3]{2}-1 \right) }{m_2^{3}}}.
	\end{align*}
	\begin{figure}[!ht]
	\centering
	\subfigure[\,Critical $\epsilon$.]{
	\includegraphics[width=0.35\textwidth]{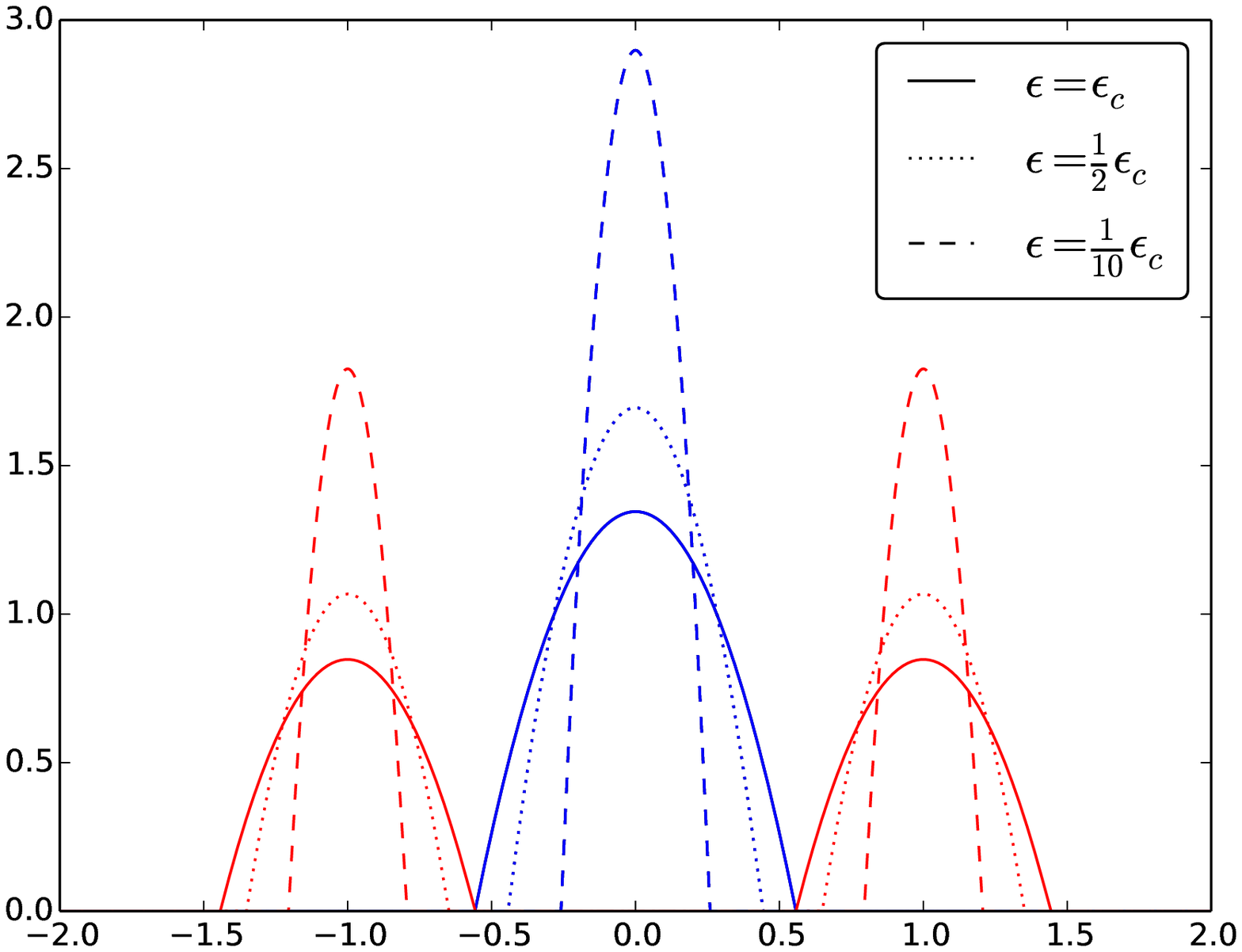}
	}
    \subfigure[\,Maximal $M_2$.]{
    \includegraphics[width=0.35\textwidth]{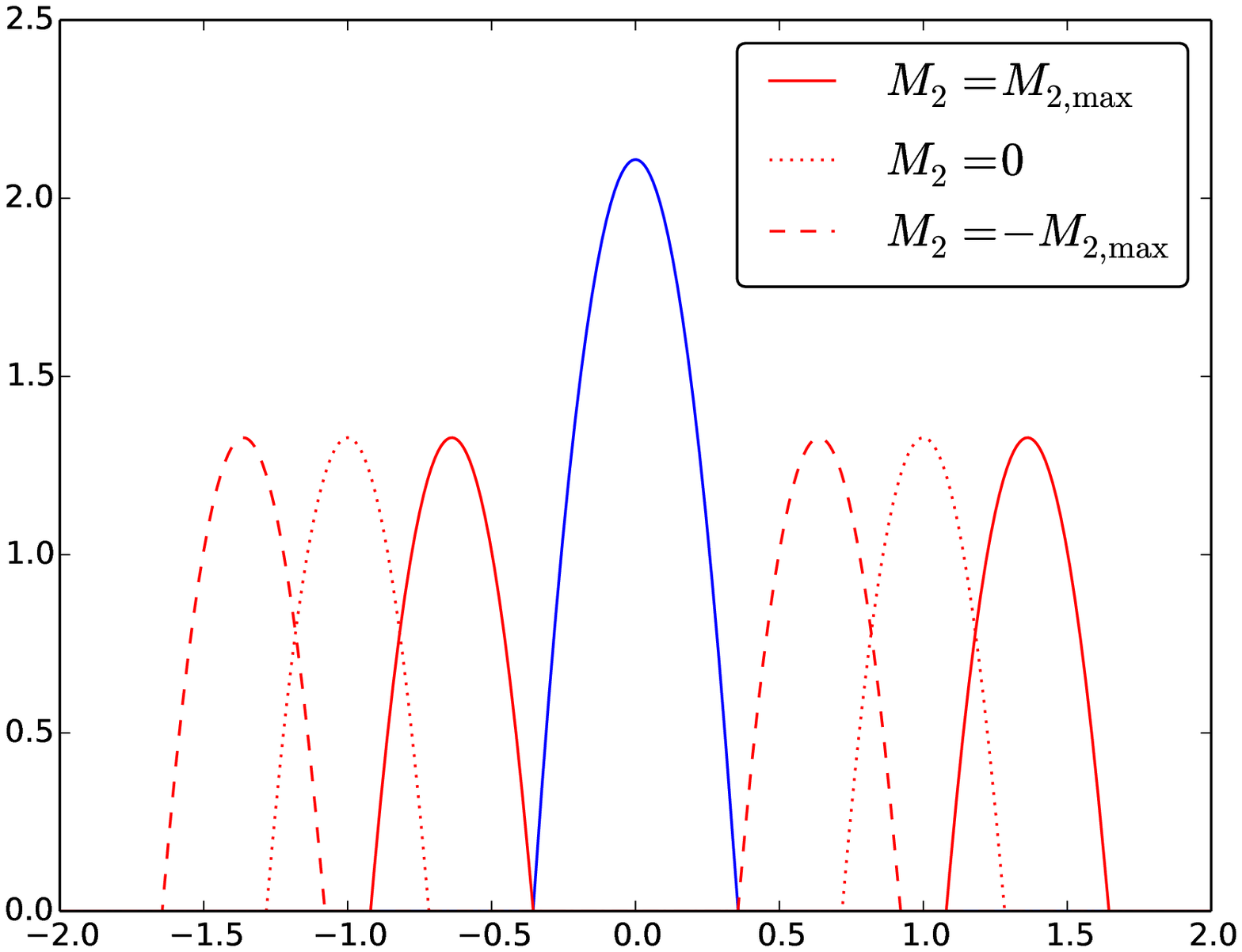}
    }
    \caption{Left: If $\epsilon\in(0, \epsilon_c)$ strictly segregated distributions are possible (dashed/dotted). If $\epsilon = \epsilon_c$ both species touch (straight line), and for $\epsilon>\epsilon_c$ strictly segregated states are no longer possible. Right: In the case $\epsilon\in (0,\epsilon_c)$ there exists a whole spectrum of steady states, parametrised by $M_2$ ranging from $-M_{2,\rm{max}}$, (dashed) to  $M_{2,\rm{max}}$ (straight line). The case $M_2 = 0$ corresponds to the symmetric case.}
    \label{fig:CritEpsMaxM2}
\end{figure}

If $\epsilon = \epsilon_c$ both species touch at the points $\{-c,c\}$ or are partially adjacent. If $\epsilon< \epsilon_c$ but we choose $M_2$ outside of the aforementioned range we observe steady states consisting of (partially) adjacent bumps. 
\begin{figure}[ht!]
	\centering
	\subfigure[$\epsilon = 1/20$.]{
		\includegraphics[width=0.31\textwidth]{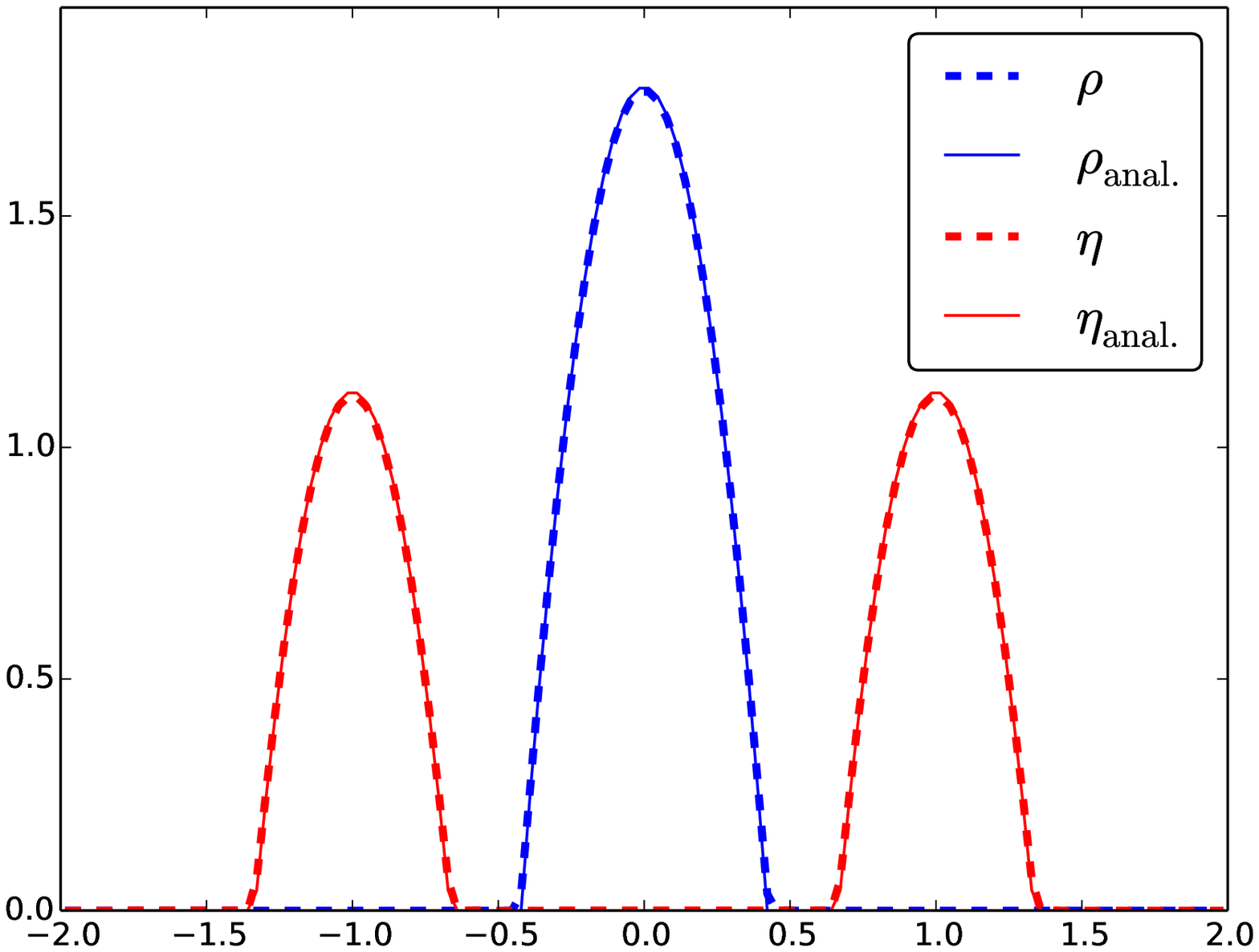}
	}
	\hspace{-1em}
	\subfigure[$\epsilon=  \frac49  (2^{1/3} - 1)$.]{
		\includegraphics[width=0.31\textwidth]{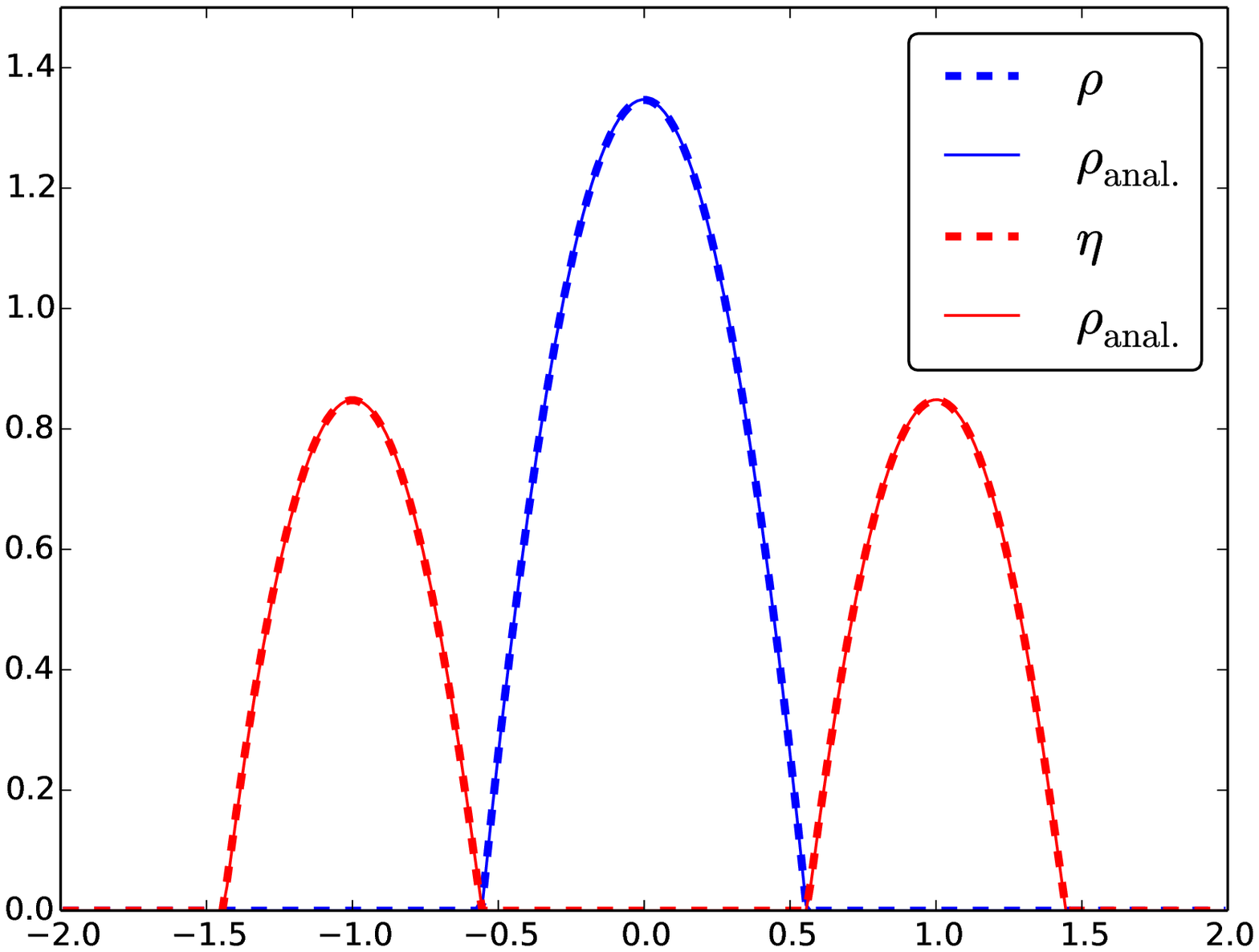}
	}
	\hspace{-1em}
	\subfigure[$\epsilon = 1/2$.]{
		\includegraphics[width=0.31\textwidth]{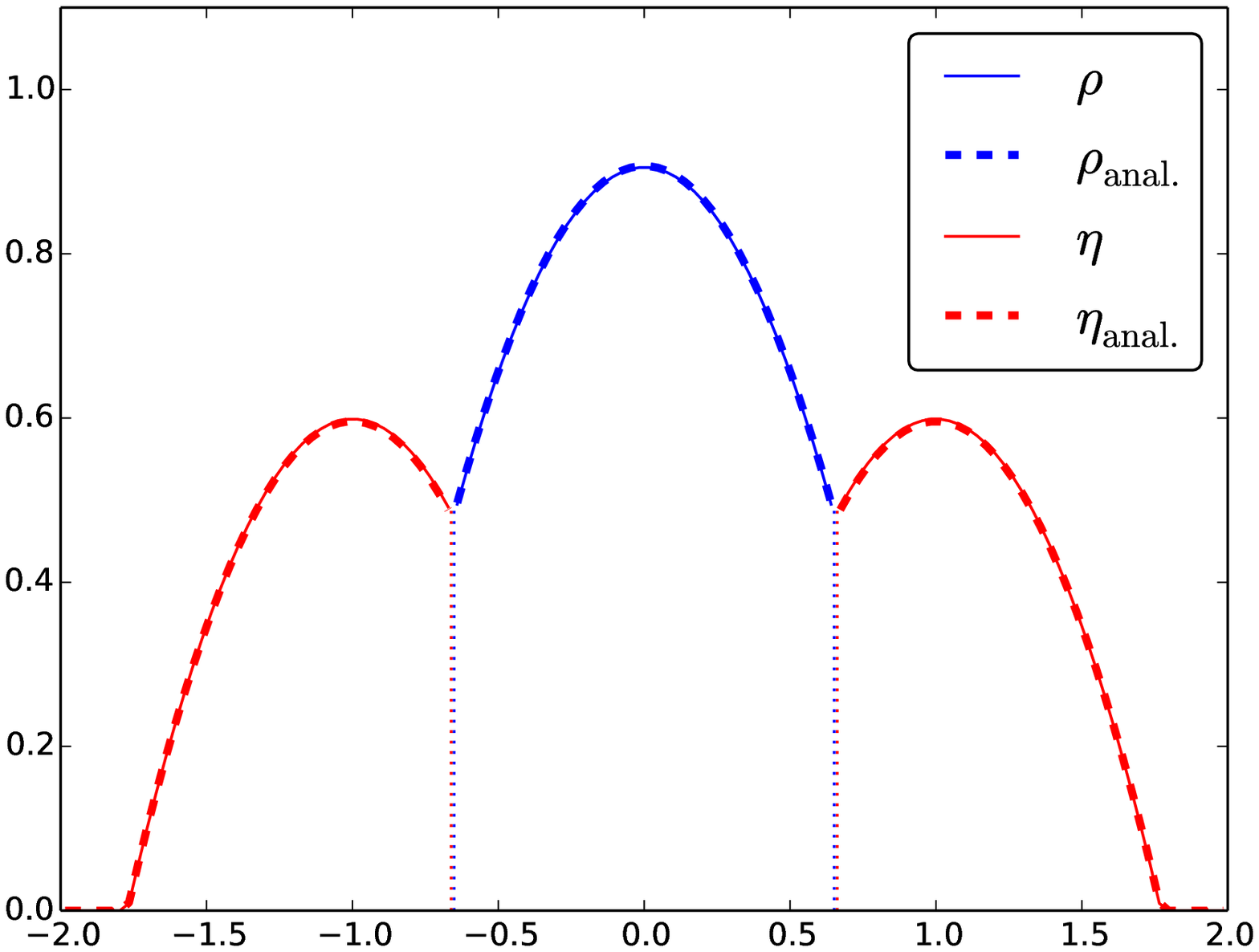}
	}
	\caption{Stationary distributions for same initial data and mass. For $\epsilon < \epsilon_c$ both species are strictly segregated. For $\epsilon> \epsilon_c$ both species are adjacent and they touch at $x=\pm c$ for the critical value of $\epsilon$.}
	\label{fig:AttrRepForDifferentEpsilon}
\end{figure}

Figure \ref{fig:AttrRepForDifferentEpsilon} displays the steady states in the symmetric case, \textit{i.e.} $M_2 = 0$, for attractive-repulsive cross-interactions. We observe a transition of behaviour for different values of $\epsilon$, ranging from strictly segregated states to completely adjacent states. The numerical results agree perfectly with the results obtained analytically.\\

\paragraph{Vanishing diffusion regime}
As we have seen in Figure \ref{fig:CritEpsMaxM2}, there is an $\epsilon_c$ such that the steady states parameterised by $M_2 \in [-M_{2,\rm{max}}, M_{2,\rm{max}}]$ are segregated. In this section we consider the case of vanishing cross-diffusion. We can assume that $\epsilon<\epsilon_c$ and $M_2\in [M_{2,\rm{min}}, M_{2,\rm{max}}]$. Then Eqs. \eqref{eq:support_of_segregated_attrrep_steadystates} determine the support of all densities. We can rewrite the support as follows
\begin{align*}
	\supp(\eta^L) &= \frac{M_2 -m_1}{m_2} - (3/4 \epsilon)^{1/3}\, [-1,1],\\
	\supp(\rho)   &= (3/4 \epsilon)^{1/3}\, [-1,1],\\
	\supp(\eta^R) &= \frac{M_2 +m_1}{m_2} + (3/4 \epsilon)^{1/3}\, [-1,1] 
\end{align*}
We see that the support shrinks to the single points
\begin{align*}
	\supp(\eta^L)=\left\{\frac{M_2-m_1}{m_2}\right\},\quad \supp(\rho) = \{0\}, \qand \supp(\eta^R) = \left\{\frac{M_2+m_1}{m_2}\right\},
\end{align*}
and $M_2\in(-m_1,m_1)$ with steady states 
\begin{align}
	\label{eq:eps2zero_attrrep}
	\rho = \delta_0 \qqand \eta = \frac12 \left(\delta_{\frac{M_2-m_1}{m_2}}+\delta_{\frac{M_2+m_1}{m_2}}\right).
\end{align}
This is indeed a measure solution of system. To see this let us consider
\begin{align*}
	\dot X   &= -\frac{m_2}{2} W_{12}'(X-Y_1) - \frac{m_2}{2}W_{12}'(X-Y_2),\\
	\dot Y_1 &= -\frac{m_2}{2} W_{22}'(Y_1-Y_2) - m_1 W_{21}'(Y_1-X),\\
	\dot Y_2 &= -\frac{m_2}{2} W_{22}'(Y_2-Y_1) - m_1 W_{21}'(Y_2-X).
\end{align*}
Since we are looking for a steady state we observe 
\begin{align*}
	\dot X  = 0 \quad \Leftrightarrow\quad \left\{
	\begin{array}{l}
		X-Y_1 > 0\, \land\, X-Y_2<0,\quad \text{or} \\
		X-Y_1 < 0\, \land\, X-Y_2>0.
	\end{array}
	\right.
\end{align*}
We assume without loss of generality that $X-Y_1 >0\, \land\, X-Y_2<0$, \textit{i.e.} $Y_1 < X < Y_2$. From $\dot Y_1 =0 $ a short computation yields
\begin{align*}
	Y_2 - Y_1 = 2\frac{m_1}{m_2}.
\end{align*}
Fixing $X=0$ we get $Y_2 = Y_1 + 2 m_1/m_2$ and $Y_1 \in [-2\frac{m_1}{m_2},0]$. This is exactly the solution of the system as $\epsilon \rightarrow 0$, cf. Eq. \eqref{eq:eps2zero_attrrep}.\\

\paragraph{Stability of steady states}
Here we want to discuss the stability of the stationary states of the attractive-repulsive system. In general, the stationary states are not stable as small perturbations may lead to a completely different stationary state. It becomes clear in Figure \ref{fig:CritEpsMaxM2}, that perturbing $\eta$ by shifting it to either side leads to a completely different stationary state. Although this is an arbitrarily small perturbation in any $L^p$-norm, the translated profile is another stationary state. This is why these profiles are not stable. The same argument holds for symmetric stationary states. However, they are stable under symmetric perturbations since any symmetric initial data is attracted by the symmetric profile. Characterising fully the basin of attraction for each stationary state seems difficult. For perturbations shifting mass from $\eta^L$ to $\eta^R$ (or vice versa) there is no stationary state but the profile is then attracted by a travelling pulse solution.

\subsubsection{\label{sec:TravellingPulses}Travelling pulses}
In addition to the convergence to steady states we observe travelling pulse solutions in the case of attractive-repulsive cross-interactions. There are two types of travelling pulses  -- those consisting of two bumps and those consisting of three. 

In our numerical study we do not observe more than three bumps, even in the case of exponentially decaying potentials. There are however metastable states where more bumps exist but after a sufficiently long time the collapse into two or three. 

\paragraph{Two pulses}
In order to compute these profiles, we assume $[-a,a]$ denotes the initial support of $u = \eta(0)$ and therefore $[-a-x_0,a-x_0]$ the initial support of $\rho(0)$.

We transform the system into co-moving coordinates, $z= x-vt$, and obtain the following conditions for the pulse profiles 
\begin{align*}
	\begin{split}
	\begin{array}{rll}
	c_1 &= (W_{11}\star u)(z+x_0) + (W_{12}\star u)(z) + \epsilon u(z+x_0) +v z, &\text{ on } [-a-x_0,a-x_0],\\
	c_2 &= (W_{22}\star u)(z) + (W_{21}\star u)(z+x_0) + \epsilon u(z) +v z, &\text{ on } [-a,a].
	\end{array}
	\end{split}	
\end{align*}
similarly to Eqs. \eqref{eq:steadystatecond}. A computation similar to Eqs. (\ref{eq:SelfInteractionTerm}, \ref{eq:CrossInteractionTerm}), leads to the explicit form of the pulse 
\begin{align*}
	u(z) = -\frac{1}{2\epsilon} mz^2 + \frac{M+m-v}{\epsilon} z + \tilde c_1,
\end{align*}
on $[-a,a]$ for some constant $\tilde c_1$. Since $u(z)$ is a parabola with roots $\pm a$, $u$ is  symmetric. As a consequence we obtain $M=v-m$. By definition of $M=\int z u(z)\d z = 0$, whence	$v = m$. Hence the shape is given by
\begin{align}
	\label{eq:TwoPulseProfile}
 	u(z) = u(z)- u(a) = -\frac{1}{2\epsilon}m(z^2-a^2).
 \end{align}
Then the following consideration determines the boundary of the support, $a$,
\begin{align}
	\label{eq:19082016_1624}
	\int_{-a}^a u(z)\d z = m \quad \Rightarrow\quad a = \sqrt[3]{\frac{3\epsilon}{2}}.
\end{align}
Finally, the distance between both profiles, $x_0$, is arbitrary so long as it does not lead to an overlap of both pulses, \textit{i.e.} $x_0 \geq 2a$, because both profiles are moving at the same speed.

Lastly, let us show that there are no adjacent solutions that is solutions whose support is of the form
\begin{align*}
	\supp(u_1) = [-a,0] \qquad \text{and} \qquad \supp(u_2) = [0,a].
\end{align*}
If there were travelling pulse solutions of this form they would satisfy the same equations as above. Then,
\begin{align*}
	\begin{array}{rll}
		u_1(z) &= -\dfrac{1}{2\epsilon} m(z^2 -a^2) + \dfrac{M+m-v}{\epsilon} (z+a), & \text{ on } [-a,0],\\[1em]
		u_2(z) &= -\dfrac{1}{2\epsilon} m(z^2 -a^2) + \dfrac{M+m-v}{\epsilon} (z-a), & \text{ on } [0,a].
	\end{array}
\end{align*}
The continuity of the sum suggests that $u_1(0)=u_2(0)$ implies $m=v$. But then 
\begin{align*}
	\int_0^a z u_2(z) \d z = M \qquad \Rightarrow \qquad M = \frac34{\frac {m{a}^{4}}{5\,{a}^{3}-6\,\epsilon}}
\end{align*}
We solve this expression for $a>0$ and find
$a = \sqrt[3]{12\epsilon}$.
A comparison of the support of the adjacent solutions and the support of segregated solutions, cf. Eq. \eqref{eq:19082016_1624}, shows that the adjacent solutions in fact only touch.

\begin{figure}[!ht] 
	\includegraphics[width=1\textwidth]{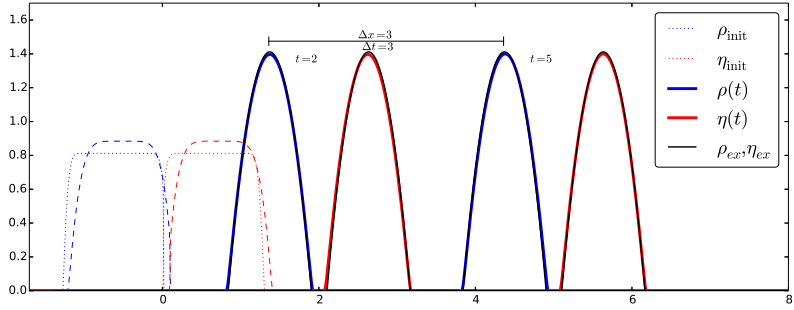}	
	\caption{The initial data is given by the dotted graph, the dashed lines are an intermediate solution. After some time $t\approx 2$ the travelling pulse profile is established. The pulses move at velocity $v=m=1$ as can be seen in the graph, since $\Delta x = \Delta t = 3$.}
	\label{fig:twopulsesolution}
\end{figure}

Figure \ref{fig:twopulsesolution} shows the formation of two travelling pulses. We start with two indicator functions as initial data and let the system evolve. At about time $t\approx 2$ we observe a fully established pulse profile. We let the system evolve further and compare the solution at $t=5$ with the solution at time $t=2$. The figure shows that the shapes do not change any further but are only transported at a velocity of $v=\Delta x / \Delta t =1$ in perfect agreement with the analytical result, $v=m$.

Subsequently, we shall see that the solution consisting of two pulses is in fact a special case of the three-pulses configurations. The latter consist of the first species, $\rho$, surrounded by the second species, $\eta$. We assume
\begin{align*}
	\supp(\rho) = [-c,c], \qquad \text{and} \qquad \supp(\eta) = [a,b] \cup [d,e],
\end{align*}
where $a < b \leq -c <c \leq d <e$ are real numbers and
\begin{align*}
	\rho(t,x) = \rho(x-vt),\qquad \text{and} \qquad \eta(t,x) = \eta(x-vt).
\end{align*}
We transform to co-moving coordinates, $z = x-vt$, and obtain the following conditions for the profile
\begin{align*}
	\begin{array}{rll}
	c_1 &= (W_{11} \star \rho)(z) + (W_{12}\star\eta)(z) + \epsilon \rho(z) + vz, & \text{on } [-c,c],\\
	c_2 &= (W_{22} \star \eta)(z) + (W_{21}\star\rho)(z) + \epsilon \eta(z) + vz, & \text{on } [a,b]\cup[d,e],\\
	\end{array}
\end{align*}
whence we obtain
\begin{align}
	\label{eq:ThreePulseProfileRho}
	\rho(z) = - \frac{1}{2\epsilon} m z^2 + \frac{(m^R-m^L) -v}{\epsilon}z + \tilde c_1.
\end{align}
Here
\begin{align}
	\label{eq:massofpulses}
	m^L = \int_a^b \eta(z) \d z,\qqand m^R = \int_d^e \eta(z) \d z.
\end{align}
Similarly, the profiles of the second species are given by
\begin{align*}
	\eta^L(z) = -\frac{1}{2\epsilon}m (z^2 -a^2) + \frac{M_2 -m -v}{\epsilon}(z-a), \qquad \text{on } [a,b],
\end{align*}
and
\begin{align*}
	\eta^R(z) = -\frac{1}{2\epsilon}m (z^2 -e^2) + \frac{M_2 +m -v}{\epsilon}(z-e), \qquad \text{on } [d,e].
\end{align*}
Again, we use the fact that the sum of both densities has to be continuous, \textit{i.e.}
\begin{align*}
	\eta^L(a) = 0, \quad \eta^L(b) = \rho(-c), \quad \rho(c) = \eta^R(d), \quad \text{and} \quad \eta^R(e) = 0,
\end{align*}
where $\rho(-c) = 0$ if $b<-c$, and $\rho(c) = 0$ if $c<d$. In addition the conditions on the masses
\begin{align}
	\label{eq:masscondition}
	m = \int_{-c}^c \rho(z)\d z,
\end{align}
as well as Eqs. \eqref{eq:massofpulses} hold. We consider the case of strictly segregated solutions first, \textit{i.e.} $b<-c$, and $c<d$. 
Since then $\rho(\pm c) = 0$, we may deduce from Eq. \eqref{eq:ThreePulseProfileRho} that  $v = m^R-m^L$ for the speed of propagation and
\begin{subequations}
\label{eq:support_and_profiles_of_travellingpulses}
\begin{align*}
	\rho(z)= -\frac{1}{2\epsilon} m (z^2 -c^2), \qquad \text{with}\; c = \sqrt[3]{\frac32 \epsilon},
\end{align*} 
for the shape of the first species ($c$ is determined by the mass condition, Eq. \eqref{eq:masscondition}). Furthermore we obtain
\begin{align}
	\label{eq:15092016_1539}
	\eta^L(b)= 0 \quad \Rightarrow \quad a=\frac {-m b+2{M_2}-2m-2v}{{m}},
\end{align}
in terms of $b$. Similarly, we can get an expression for $e$ in terms of $d$, \textit{i.e.}
\begin{align}
	\label{eq:15092016_1544}
	\eta^R(d) = 0\quad \Rightarrow \quad e = \frac {-{m} d+2 {M_2}+2{m}-2v}{{m}}.
\end{align}
Using the expression for $a$, we obtain
\begin{align}
	\label{eq:15092016_1535}
	\int_a^b\eta^L(x) \d x = m^L \quad \Rightarrow \quad b = {\frac {\sqrt [3]{\dfrac32 \epsilon m^2
m^L}+{M_2}-{m}-v}{{m}}}.
\end{align}
Now we employ the expression for $e$ to get
\begin{align}
	\label{eq:15092016_1535_1}
	\int_d^e \eta^R(x) \d x= m^R	\quad \Rightarrow \quad d= {\frac {-\sqrt [3]{\dfrac32 \epsilon  m^2 m^R}+{M_2}+{m}-v}{{m}}}.
\end{align}
\end{subequations}
Note that Eqs. \eqref{eq:support_and_profiles_of_travellingpulses} completely determine the support and the profiles of the pulses. Figure \ref{fig:threepulsesolution} shows the formation of a triple pulse solution. We choose characteristic functions as initial data (dotted). The mass on the left is $m^L = 1/3$ and, respectively, $m^R=2/3$ on the right. After some time the pulse profile is established. We compare the system (blue and red) at time $t=9$ and time $t=24$ with the analytical expression derived above (black). The figure displays a great agreement between our numerical result and the analytical. Once the profile is fully established it moves to the right at a constant speed. The numerical velocity is given by $\Delta x/\Delta t = 5/15 = 1/3$. This is in perfect agreement with the analytically obtained results, \textit{i.e.} $v=m^R-m^L = 2/3 -1/3 = 1/3$.\\
\begin{figure}[ht!]
	\centering
	\includegraphics[width=0.85\textwidth]{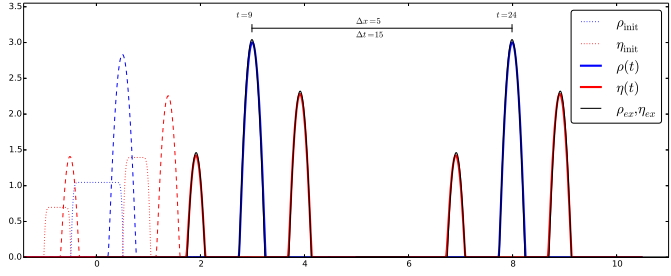}
	\caption{Travelling pulse solution consisting of three pulses. The mass of the second species amounts to $m^L = 1/3$ on the left and to $m^R=2/3$ on the right, respectively. The dotted lines represent the initial data and the dashed lines represent an intermediate solutions. After some time, $t\approx 9$, the pulse profiles have established and both species move to the right at a velocity of $v=\Delta x/\Delta t = 1/3$. The thick red and blue lines correspond to the numerical solution, the black lines to the analytically computed travelling pulse solution.}
	\label{fig:threepulsesolution}
\end{figure}

At this stage, let us draw our attention to two special cases. 
\begin{remark}
First, we consider the case $m^L = 0$. In this case $v=m^R-m^L = m$. Thus, in conjunction with Eqs. (\ref{eq:15092016_1539},\ref{eq:15092016_1535}) there holds $b = \tfrac{M_2}{m} = a$,
\textit{i.e.} the left part is degenerate.  Moreover, the support of the right part is
\begin{align*}
 d= -\sqrt [3]{\dfrac32 \epsilon} + \frac {M_2}{m},
\end{align*}
according to Eq. \eqref{eq:15092016_1535_1}. We substitute this into the Eq. \eqref{eq:15092016_1544} and get
\begin{align*}
	e = \frac {-m d+2 M_2}{m} =  \sqrt [3]{\dfrac32 \epsilon} + \frac{M_2}{m}.
\end{align*}
Thus we can write the support in the following form
$
	\left[-\sqrt[3]{\frac32 \epsilon},\sqrt[3]{\frac32 \epsilon}\right] + \frac{M_2}{m}.
$
Let us have a closer look at $\eta^R$ now. Using $e = c + \frac{M_2}{m}$ we obtain
\begin{align*}
	\eta^R(z) &= -\frac{1}{2\epsilon}m \bigg(z - \frac{M_2}{m} -c \bigg)\!\bigg( z- \frac{M_2}{m} + c\bigg),
\end{align*}
where we set $x_0 := M_2/m$. Thus we finally obtain 
\begin{align*}
	\eta^R(z) = -\frac{1}{2\epsilon} m ((z-x_0)^2-c^2),
\end{align*}
supported on the interval $[-c+x_0, c+x_0]$. This is precisely the solution to the two-pulse system, cf. Eq. \eqref{eq:TwoPulseProfile}. 
\end{remark}

\begin{remark}
The second remark concerns the case $m^R = m^L$. Then $v=0$ and, according to Eqs. (\ref{eq:15092016_1535}, \ref{eq:15092016_1535_1}), we get
\begin{align*}
	b = \left(\sqrt [3]{\dfrac34 \epsilon} - 1\right) + \frac{M_2}{m}\quad \text{and}\quad d= - \left(\sqrt [3]{\dfrac34 \epsilon}-1\right) +\frac{M_2}{m},
\end{align*}
which are equal to Eqs. (\ref{eq:30082016_0940}, \ref{eq:15092016_2014}) in the case $m=m_1=m_2$. In addition, Eqs. (\ref{eq:15092016_1539},\ref{eq:15092016_1544}) turn into Eqs. (\ref{eq:2408216_1229}, \ref{eq:240816_1425}), \textit{i.e.} the support of the tripple pulse solutions agrees with the support of the fully segregated steady states. Similarly, the shapes agree in the case $v=0$.
\end{remark}

\begin{remark}[Maximal $M_2$]
Let us get back to the general case. We study the interval of $M_2$. Assuming $\epsilon$ fixed, $b=-c$ yields
\begin{align*}
	M_{2,\rm{max}} &= \frac12 \sqrt [3]{12\epsilon}m +\frac12 \sqrt [3]{12{m_R}\,\epsilon\,m^2}-m_1+v.
\end{align*}
On the other hand, $c=d$ gives
\begin{align*}
	M_{2,\rm{min}}=-\frac12\sqrt [3]{12\epsilon}{m}-\frac12 \sqrt [3]{12{m_L}\,\epsilon\,m^2}+m+v,
\end{align*}
where $v=m^R-m^L$, as above. It is worthwhile noting that in the case $m^L=m^R$ both $M_{2,\rm{max}}$ and $M_{2,\rm{min}}$ coincide with Eqs. (\ref{eq:16092016_1845}, \ref{eq:16092016_1845_1}) for the stationary state.
\end{remark}

Parallel to the consideration for (partially) adjacent steady states of the attractive-repulsive system we also find the existence of adjacent travelling pulse solutions. 

\section{\label{sec:gen}Generality}
This section is dedicated to the study of more general or realistic potentials to understand whether the behaviours observed above are specific to our interaction potentials. Different cross-interaction and  self-interaction potentials will be investigated.
Even though analytic expressions for the steady states and travelling pulses seem no longer avaiablable, the behaviours are indeed generic and, in fact, even richer than the above particular model. 
\subsection{Different cross-interactions}
Let us begin by considering different cross-interaction potentials. We regard two types of potentials  --- power-laws  and Morse-like potentials decaying at infinity, \textit{i.e.}
\begin{align*}
	W_{12} = |x|^p = \pm W_{21}, \quad \text{and} \quad W_{12} = 1 - \exp(-|x|^p) = \pm W_{21},
\end{align*}
where $p \in\{1/2,1,3/2\}$. This choice of potentials is motivated as they are similar to the Newtonian cross-interaction. 

In both cases, we observe a very similar behaviour both in the mutually attractive case and the attractive-repulsive case, respectively. 
	\begin{figure}[ht!]
		\centering
		\subfigure[$W_{\mathrm{cr}} = |x|^{\frac12}$]{
			\includegraphics[width=0.28\textwidth]{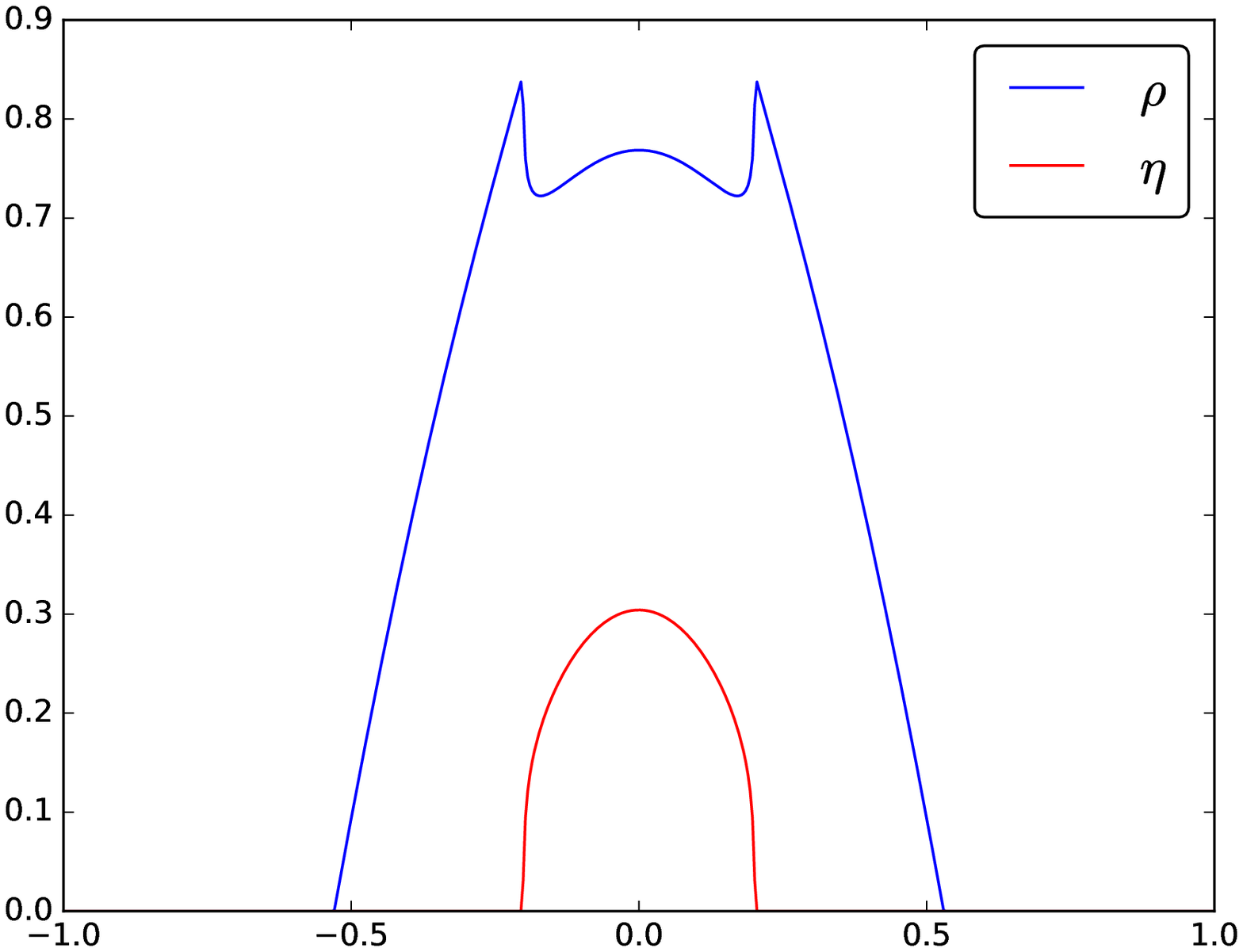}
		}
		\subfigure[$W_{\mathrm{cr}} = |x|$]{
			\includegraphics[width=0.28\textwidth]{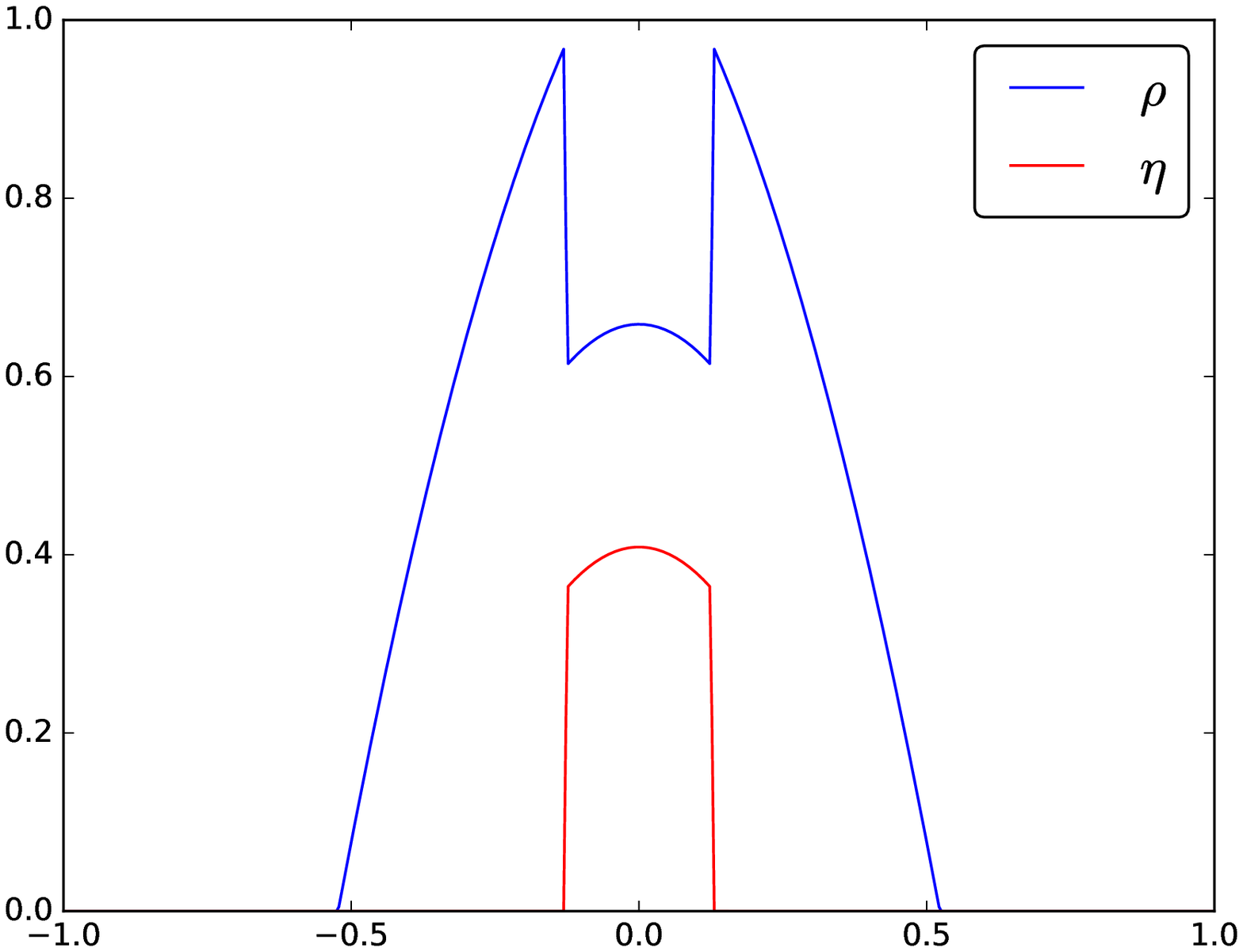}
		}
		\subfigure[$W_{\mathrm{cr}} = |x|^{\frac32}$]{
			\includegraphics[width=0.28\textwidth]{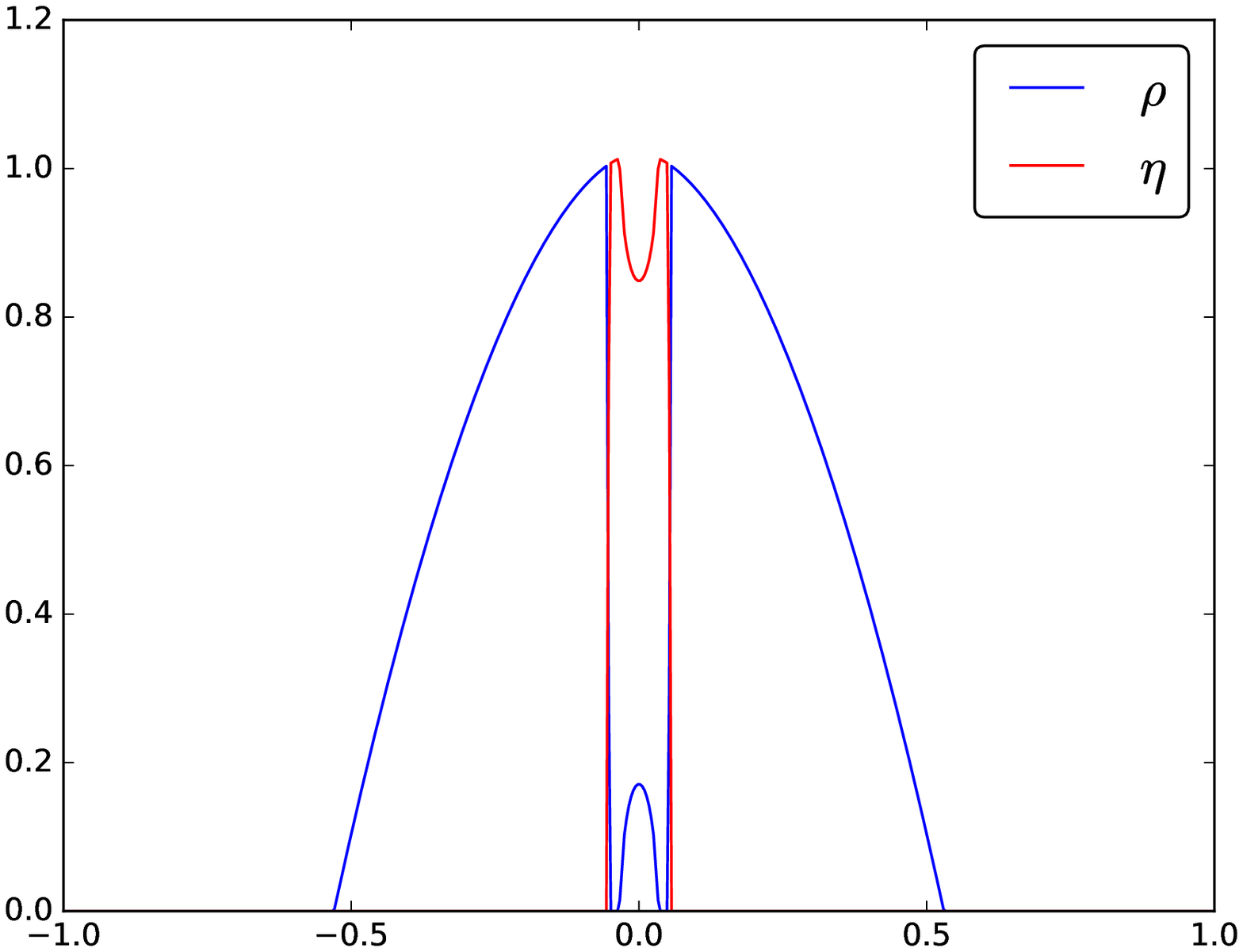}
		}
		\subfigure[$W_{\mathrm{cr}} = 1-\exp(-|x|^{\frac12})$]{
			\includegraphics[width=0.28\textwidth]{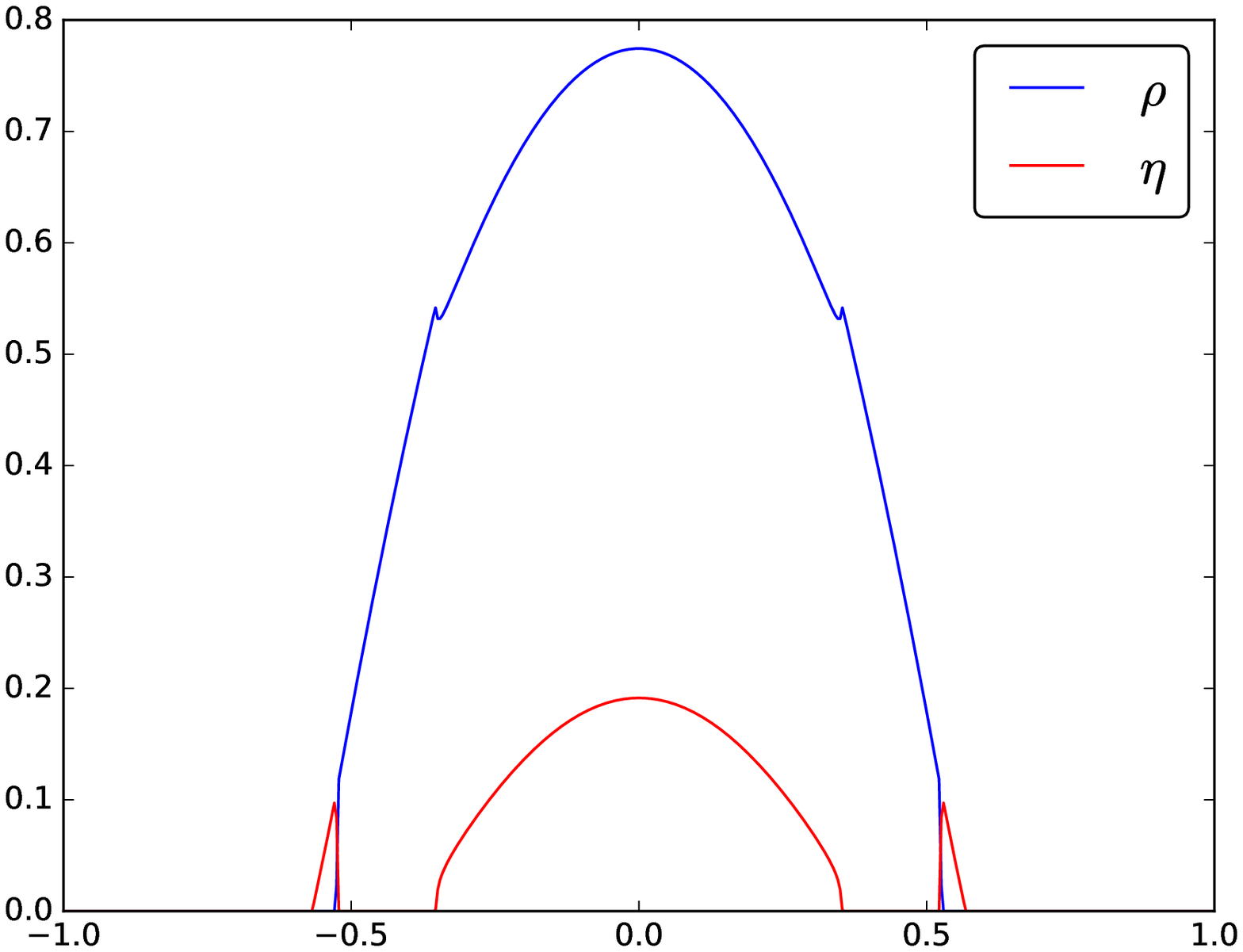}
			\label{subfig:d}
		}
		\subfigure[$W_{\mathrm{cr}} = 1 -\exp(-|x|)$]{
			\includegraphics[width=0.28\textwidth]{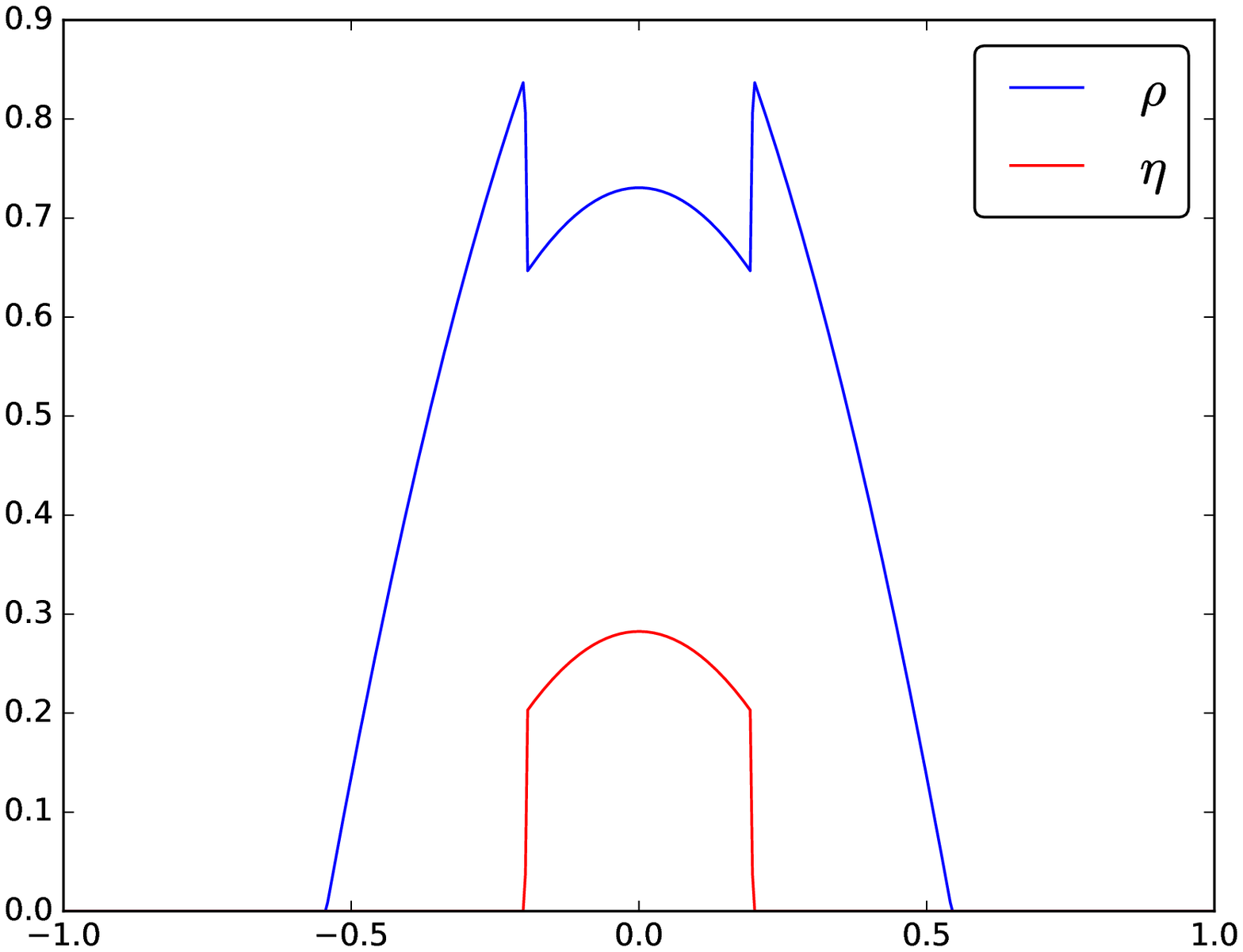}
		}
		\subfigure[$W_{\mathrm{cr}} = 1- \exp(-|x|^{\frac32})$]{
			\includegraphics[width=0.28\textwidth]{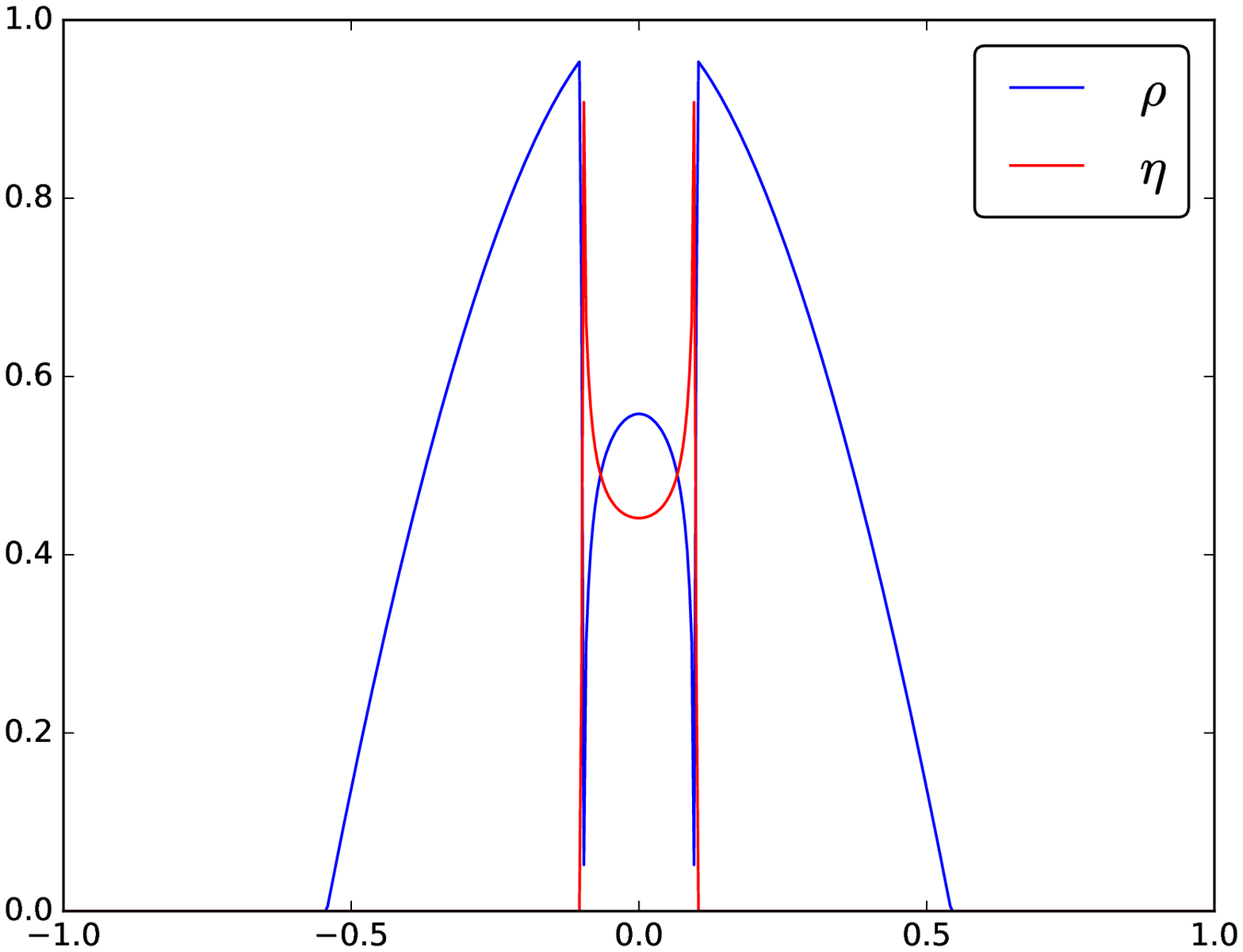}
		}
		\caption{The Batman profiles for different interaction potentials.}
		\label{fig:differentbatmans}
	\end{figure}
	Figure \ref{fig:differentbatmans} displays the Batman profile for different cross-interaction potentials. In all simulations the same initial data, mass, and cross-diffusivity were used. Each steady state features the salient characteristics observed in the case  $W_{cr}=|x|$, \textit{i.e.} a region of coexistence surrounded by regions inhabited by only one species. From the steady states we can also infer another information, namely,  second type profiles exist and the point of bifurcation  depends on the potential, for only Figure   \ref{subfig:d} exhibits a profile of second type. Similarly, we observe a  symmetrising effect for small cross-diffusivities and asymmetric profiles.

\subsection{Different self-interactions}
Here, we keep the cross-interaction potentials fixed as $W_{12} = |x| = \pm W_{21}$ and consider different self-interaction potentials of the form $W(x)=|x|^p/p$, for $p \in \{3/2, 2, 4\}$. In each case we observe a very similar behaviour. We obtain the same variety including both Batman profiles and the profiles of second type. Again we observe that the system is symmetrising, however for a different $\epsilon^{(1)}$. In the attractive-repulsive case as well we observe the characteristic profiles and the formation of pulses.


\section{\label{sec:con}Conclusions}
In this paper we introduced a system of two interacting species with cross-diffusion. We used a positivity-preserving finite-volume scheme in order to study the system numerically. For a specific choice of potentials, the steady states can be constructed with parameters governed by algebraic equations. These numerically simulated and the analytically constructed stationary states and travelling pulses
were found to agree with each other.
Using the same scheme the model was explored for related potentials 
and the behaviours observed for the specific potentials
turned out to be generic, when the cross-interaction potentials
or the self-interaction potentials were exchanged.
While this paper gives a first insight as to what qualitative properties can be expected from models taking the general form~\eqref{eq:OurModelMultiD}, there is still a lot of analytical work to be done. First and foremost, it is still an open problem to show existence of solutions to the systems. The formal gradient flow structure
is lost when the cross-interaction potentials $W_{12}$ and $W_{21}$
are not proportional to each other, and the main problem is to find the right estimates for individual species since we only control
the gradient of the sum of the densities. 


\section*{Acknowledgments}
JAC was partially supported by the Royal Society via a Wolfson Research Merit Award and by EPSRC grant number EP/P031587/1.

\bibliographystyle{abbrv}

\def\cprime{$'$}

\end{document}